\documentclass[12pt]{article}

\usepackage{graphicx, amssymb, latexsym, amsfonts, amsmath, lscape, amscd,
amsthm, color, epsfig, mathrsfs, tikz, enumerate, todonotes, caption, xcolor, soul}

\definecolor{patriarch}{rgb}{0.5, 0.0, 0.5}
\definecolor{brown(traditional)}{rgb}{0.59, 0.29, 0.0}

\setlength{\topmargin}{-1.5cm}
\setlength{\textheight}{23cm} 
\setlength{\textwidth}{16cm}    
\setlength{\oddsidemargin}{0cm} 
\setlength{\evensidemargin}{0cm}

% ----------------------------------------------------------------
\vfuzz2pt 
\hfuzz2pt 
\newtheorem{theorem}{Theorem}[section]
\newtheorem{conjecture}[theorem]{Conjecture}

\newtheorem{lemma}[theorem]{Lemma}

\newtheorem{question}{Question}

\newtheorem{observation}[theorem]{Observation}

\theoremstyle{definition}
\newtheorem{remark}[theorem]{Remark}

\usepackage[]{mdframed}

\begin{document}

\title{{\bf A linear algorithm for radio $k$-coloring of powers of paths having small diameters}}
\author{
{\sc Dipayan Chakraborty}$\,^{c, d}$ , {\sc Soumen Nandi}$\,^{b}$, {\sc Sagnik Sen}$\,^{a}$\\  {\sc D K Supraja}$\,^{a, b,\footnote{Corresponding author\\ Email address: dksupraja95@gmail.com}}$\\
\mbox{}\\
{\small $(a)$ Indian Institute of Technology Dharwad, India}\\
{\small $(b)$ Netaji Subhas Open University, India}\\
{\small $(c)$ Université Clermont-Auvergne, CNRS, Mines de Saint-Étienne,}\\ {\small Clermont-Auvergne-INP, LIMOS, 63000 Clermont-Ferrand, France}\\
{\small $(d)$ Department of Mathematics and Applied Mathematics,}\\
{\small University of Johannesburg,
Auckland Park, 2006, South Africa}\\
}

\date{\today}

\maketitle

\begin{abstract}
The radio $k$-chromatic number $rc_k(G)$ of a graph $G$ is the minimum integer $\lambda$
such that there exists a function $\phi: V(G) \to \{0,1,\cdots, \lambda\}$ satisfying 
$|\phi(u)-\phi(v)| \geq k+1 - d(u,v)$, where $d(u,v)$ denotes the distance between $u$ and $v$. 
A considerable amount of attention has been 
given to find the exact values or providing polynomial time algorithms to determine $rc_k(G)$ for several basic graph families such as paths, cycles, trees, and powers of paths, usually for some specific values of $k$.

In this article, we find the exact values of $rc_k(G)$ where $G$ is a power of a path  
with diameter strictly less than $k$.
Our proof readily provides a linear time algorithm for assigning a radio $k$-coloring of $G$. 
Furthermore, our 
proof technique is a potential tool for solving the same problem for other classes of graphs having ``small'' diameters.

\medskip

\noindent \textbf{Keywords:} radio coloring, radio $k$-chromatic number, Channel Assignment Problem, power of paths.
\end{abstract}

\section{Introduction}

The Channel Assignment Problem (CAP) in wireless networks can be modeled using different graph labeling problems with distance constraints~\cite{hale1980frequency}. One of the prominent types of such distance constrained labelings was defined by Griggs and Yeh~\cite{griggs1992labelling} in 1992 as follows: Given non-negative integers $p_1, p_2, \cdots, p_k$, an $L(p_1, p_2, \cdots, p_k)$-labeling of a graph $G$ is labeling its vertices with non-negative integers such that vertices at distance exactly $i$ receive labels that differ by at least $p_i$.

In 2001, Chartrand, Erwin and Zhang~\cite{chartrand2000radio} initiated a focused study on a special case of $L(p_1, p_2, \cdots, p_k)$-labeling, namely, the radio $k$-coloring of graphs, where $p_i = k+1 - i$ for each $i \in [k]$. In other words, a \textit{$\lambda$-radio $k$-coloring} of a graph $G$ is a function 
$\phi: V(G) \to \{0,1,\cdots, \lambda\}$ satisfying $|\phi(u)-\phi(v)| \geq k+1 - d(u,v)$.

For every $u \in V(G)$, the value $\phi(u)$ is generally referred to as the \textit{color} of $u$ under $\phi$. Usually, we pick $\lambda$ in such a way that it has a preimage under $\phi$, and then, we call $\lambda$ to be the span of $\phi$, denoting it by $span(\phi)$. 
The \textit{radio $k$-chromatic number}\footnote{In the case that $diam(G)=k, k+1$ or $k+2$, the radio $k$-chromatic number is alternatively known as the \textit{radio number} denoted by $rn(G)$, the \textit{radio antipodal number} denoted by $ac(G)$ and the \textit{nearly antipodal number} denoted by $ac'(G)$, respectively.}  $rc_k(G)$ is the minimum $span(\phi)$,
where $\phi$ varies over all radio $k$-colorings of $G$. Note that radio $k$-coloring can also be seen as $L(k, k-1, \cdots, 1)$-labeling. For $k=1$, radio $1$-coloring is merely the proper vertex coloring and for $k=2$, radio $2$-coloring is the well-known $L(2,1)$-coloring. Radio $k$-coloring is a generalized version and is extensively studied for fixed values of $k$ (especially when $k=1,2,3$ and $4$). 

\begin{remark}
     In this model, the frequency separation (that is, the same as the difference between the colors used) varies inversely proportional to the distance, which is a correct requirement for a real-life model. Therefore, radio $k$-coloring can be viewed as a true mathematical model of the CAP, albeit of a very basic form.
\end{remark}

In this article, we focus on the theoretical aspects of radio $k$-coloring. 
All the graphs considered in this article are undirected simple graphs and we refer to the book 
``Introduction to Graph Theory'' by West~\cite{D.B.West} for all standard notations and terminologies used.

\bigskip

\subsection{Context and motivation}

\medskip

\begin{enumerate}[(1)]
    \item The radio $2$-chromatic number is the most well-studied restriction of the parameter, besides the radio $1$-chromatic number, which is equivalent to studying the chromatic number of graphs. A major conjecture proposed by Griggs and Yeh~\cite{griggs1992labelling} on general bound for $rc_2(G)$ is as follows:

\begin{conjecture}~\cite{griggs1992labelling}
For a graph $G$ with maximum degree $\Delta$,
$$rc_2(G)\le \Delta^2.$$
\end{conjecture}
The conjecture has been resolved for all $\Delta \geq 10^{69}$ by Havet, Reed and Sereni~\cite{havet20082}. 

\medskip

\item  For a fixed integer $k\ge 2$, we define the {\sc Radio $k$-Coloring} problem as follows:

\begin{mdframed}
\noindent
\textsc{Radio $k$-Coloring}

\noindent
\textit{Instance:} A graph $G$, $\lambda \ge 4$.

\noindent
\textit{Question:} Is $rc_k(G)\le \lambda$?  
\end{mdframed}

\medskip

\noindent The radio $2$-coloring problem is well-studied from an algorithmic perspective as well. Finding the exact values of $rc_2(G)$ for a general graph is proved to be an NP-complete problem~\cite{griggs1992labelling}. Moreover, it is proven to be NP-complete for restricted classes of graphs such as planar graphs, bipartite graphs, chordal graphs, split graphs~\cite{bodlaender2004approximations} and graphs with treewidth two~\cite{fiala2005distance}. 
On the other hand, polynomial algorithms to find $rc_2(\cdot)$ are known for  trees~\cite{hasunuma2013linear}, paths, cycles and wheels~\cite{griggs1992labelling}.

\medskip

\item Since \textsc{Radio $2$-Coloring} is NP-complete for $\lambda \ge 4$~\cite{fiala2001fixed}, it is quite probable that for $k \ge 3$ as well, the problem \textsc{Radio $k$-Coloring} is NP-complete for $\lambda \ge F(k)$, where $F(k)$ is some function of $k$, possibly equal to $rn(P_k)$. However, in this paper we concentrate only on finding the exact values of the parameter $rc_k(.)$ for a specific graph class called powers of paths (defined later); and leave the complexity results as conjectures for future research in the conclusion.

\item 
Finding the exact value of $rc_k(G)$ for a given graph (usually belonging to a particular graph family) offers a huge number of interesting problems. Unfortunately, due to a lack of general techniques  for solving these problems, not many exact values are known to date. One of the best contributions in this front is the work of Liu and Zhu~\cite{liu2005multilevel} who computed the exact value of $rc_k(G)$ where $G$ is a path or a cycle and $k = diam(G)$.

\begin{theorem}~\cite{liu2005multilevel}
Let $P_n$ be a path on $n$ edges. For every integer $n\ge 3$ and $k=diam(P_n)$, 
$$rc_k(P_n)=
\begin{cases}
\dfrac{n^2+4}{2} &  \text{if $n$ is even},\\
 \dfrac{n^2+1}{2} &  \text{if $n$ is odd}.
\end{cases}$$
\end{theorem}

\medskip

\item  For small paths $P_n$, that is, with $diam(P_n) < k$, Kchikech, Khennoufa and Togni~\cite{kchikech2007linear} had established an exact formula for $rc_k(P_n)$.

 \begin{theorem}~\cite{kchikech2007linear}
Let $P_n$ be a path on $n$ edges. For any $n\ge 2$ and $k> diam(P_n)$,
$$rc_k(P_n)=
\begin{cases}
nk-\frac{n^2-1}{2} & \text{if $n$ is odd,}\\
nk-\frac{n^2}{2}+1 & \text{if $n$ is even.}
\end{cases}$$
\end{theorem}

Furthermore, for the infinite path $P_{\infty}$, the best known lower and upper bounds for the infinite path $P_\infty$ are found in two different bodies of works~\cite{das2017lower, kchikech2007linear}.

\begin{theorem}~\cite{das2017lower, kchikech2007linear}
        
Let $P_\infty$ be the infinite path. Then,
$$\dfrac{k^{2}+k}{2} \le rc_{k}(P_\infty) \le
\left\lfloor\frac{k^2+2k}{2}\right\rfloor.$$
\end{theorem}

Moreover, the upper bound was conjectured to be tight by Kchikech, Khennoufa and Togni~\cite{kchikech2007linear} and is still unresolved. 

In the case of powers of infinite path $P_\infty^m$, we see that $k<diam(P_\infty^m)$. So far, lower and upper bounds for $rc_k(P_\infty^m)$ known have been found in~\cite{vcada2013radio,das2023radio}.

 Furthermore, a number of studies on the parameter $rc_k(P_n)$ depending on how $k$ is related to $diam(P_n)$, or $n$ alternatively, have been done by various authors~\cite{kchikech2007linear,khennoufa2005note,kola2009nearly,chartrand2004radio}. 

\medskip 
 
\item So far as works on powers of paths are concerned, one of the remarkable works we know is an exact formula for the radio number $rn(P_n^2)$ of the square of a path $P_n$ by Liu and Xie~\cite{liu2009radio}. 

\begin{theorem}~\cite{liu2009radio}
Let $P_{n}$ be the path on $n$ edges. For any $n\ge 2$ and $k=diam(P_n^2)$,
$$rc_{k}(P_n^2)=
\begin{cases}
k^2+2 & \text{if $n\equiv 0 \pmod{4}$ and $n\ge 8$,} \\
k^2+1 & \text{otherwise}.
\end{cases}$$
\end{theorem}
Recently, in~\cite{das2023radio}, $rc_k(\cdot)$ for powers of
the infinite path is studied.

 \medskip 

 \item For a detailed overview of the topic, we encourage the reader to consult Chapter 7.4 of the dynamic survey on this topic maintained in the Electronic Journal of Combinatorics by Gallian~\cite{gallian2022dynamic} and the survey by Panigrahi~\cite{panigrahi2009survey}.  
 
 \end{enumerate}

 \begin{remark}
    Notice that, finding the exact value of $rc_k(G)$, or providing a polynomial time algorithm to determine it for so-called well-understood families of graphs, such as paths, cycles, trees, and powers of paths, seems to be difficult and challenging given the amount and quality of attention provided to such problems. 
    Moreover, to date, in most cases, the problems are studied for specific values of $k$ while providing tight bounds or polynomial algorithms. In that context, finding the 
    exact value  and 
    providing a polynomial algorithm to determine 
    $rc_k(\cdot)$ for a relatively complicated family of graphs, such as the powers of paths (with small diameters) for all $k$ seems like a naturally challenging problem. This article focuses on solving it. 
 \end{remark}

\subsection{Our contributions}
Progressing along the same line, in this article, we concentrate on powers of paths having ``small diameters'', that is, $diam(P_n^m) < k$ and compute the exact value of $rc_k(P_n^m)$, where $P_n^m$ denotes the $m$-th power graph of a path $P_n$ on $n$ edges\footnote{A path is often characterized by its number of vertices, and sometimes it is also characterized by its number of edges (for example, in~\cite{bondy1976graph,diestel2012graph}). In this article, we have used the notation $P_n$ for a path having $n$ edges (same notation as in~\cite{bondy1976graph}) as that better suits our calculations).}. In other words, the graph $P_n^m$ is obtained by adding edges between the vertices of $P_n$ that are at most $m$ distance apart. 
 Notice that, the so-obtained graph is, in particular, an interval graph. Let us now state our main theorem.

\begin{theorem}\label{th main}
For all $k > diam(P_n^m)$, we have

$$rc_k(P_n^m)=
\begin{cases}
nk - \frac{n^2-m^2}{2m} &\text{ if } \lceil \frac{n}{m} \rceil \text{ is odd and } m|n,\\
nk - \frac{n^2-s^2}{2m} + 1 &\text{ if } \lceil \frac{n}{m} \rceil \text{ is odd and } m \nmid n,\\
nk - \frac{n^2}{2m} + 1 &\text{ if } \lceil \frac{n}{m}\rceil \text{ is even and } m|n,\\
nk - \frac{n^2-(m-s)^2}{2m} + 1 &\text{ if } \lceil \frac{n}{m} \rceil \text{ is even and } m \nmid n,
\end{cases}
$$
where $s \equiv n \pmod m$.  
\end{theorem}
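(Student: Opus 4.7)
\medskip

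\noindent\textbf{Proof plan.} The strategy is to establish matching lower and upper bounds for $rc_k(P_n^m)$.

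For the lower bound, let $l$ be any $\lambda$-radio $k$-coloring. Since $k > diam(P_n^m)$ forces every radio constraint $k+1-d(u,v)$ to be at least $2$, all colors are distinct. Ordering the vertices as $x_0,\ldots,x_n$ with $l(x_0)<l(x_1)<\cdots<l(x_n)$ and telescoping yields
$$\lambda \;=\; \sum_{i=0}^{n-1}\bigl(l(x_{i+1})-l(x_i)\bigr) \;\geq\; n(k+1) \;-\; \sum_{i=0}^{n-1} d(x_i,x_{i+1}).$$
Thus the lower bound reduces to the combinatorial inequality
$$\sigma(\pi) \;:=\; \sum_{i=0}^{n-1}\left\lceil |\pi(i+1)-\pi(i)|/m\right\rceil \;\leq\; M(n,m)$$
for every permutation $\pi$ of $\{0,1,\ldots,n\}$, where $M(n,m)$ is calibrated so that $n(k+1)-M(n,m)$ equals the stated value in each of the four cases.

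To prove this inequality, I would partition $\{0,1,\ldots,n\}$ into consecutive blocks $B_0,B_1,\ldots,B_D$ of size $m$, with a possibly smaller terminal block when $m \nmid n$, and exploit the decomposition
$$d(v_a,v_b) \;=\; |j_a - j_b| + \chi(a,b),$$
where $j_a = \lfloor a/m\rfloor$ is the block index and $\chi(a,b)\in\{0,1\}$ is a residue-based correction determined by $a \bmod m$ and $b \bmod m$. Maximizing the first summand is the classical ``zig-zag'' linear-arrangement problem, whose optimum alternates between lowest- and highest-indexed blocks subject to each block being visited exactly $|B_j|$ times. The second summand is handled by a separate counting over block pairs. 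The four cases arise because the parity of $\lceil n/m\rceil$ controls whether the zig-zag closes with matched or mismatched endpoint blocks, while whether $m\mid n$ determines the size of the terminal block; each combination shifts the extremal value by a $\pm 1$ correction, accounting for the ``$+1$'' that appears in three of the four cases of the theorem.

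For the matching upper bound I would construct an explicit optimal coloring in each case. Starting from a permutation $x_0,x_1,\ldots,x_n$ realizing $\sigma(\pi) = M(n,m)$, define $l(x_0)=0$ and $l(x_{i+1})=l(x_i) + (k+1-d(x_i,x_{i+1}))$. By construction $span(l) = n(k+1)-M(n,m)$, matching the lower bound. It remains to verify the non-adjacent radio constraints $l(x_j)-l(x_i)\geq k+1-d(x_i,x_j)$ for $j > i+1$, and this is where the parity-dependent hypothesis on $k$ becomes essential: for even diameter the bound $k \geq diam(P_n^m)+1$ suffices, whereas odd diameter requires an additional unit to absorb an inherent parity defect in the residue correction $\chi$. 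The principal obstacle is the careful case analysis for $M(n,m)$ and the construction of the extremal permutation in each of the four cases, especially tracking the $\pm 1$ shifts; a secondary technical point is matching the parity hypothesis on $k$ with the tightness of the non-adjacent constraints.
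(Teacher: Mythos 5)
Your overall architecture coincides with the paper's: telescope the span over the color-sorted order $x_0,\ldots,x_n$ (all colors are distinct since $k>diam$), reduce the lower bound to an upper bound on $\sum_{i}d(x_i,x_{i+1})$ over all orderings, and obtain the matching upper bound by greedily coloring along an extremal ordering, checking that non-consecutive constraints are slack (the paper does this by verifying $\psi(v_{i+2})-\psi(v_i)\ge k$, which is where the parity-dependent hypothesis on $k$ is consumed, as you correctly anticipate). Where you genuinely diverge is in the combinatorial engine for bounding $\max_\pi\sum_i d(x_i,x_{i+1})$. The paper does not attack this maximum directly: it bounds each $d(v_i,v_{i+1})$ by $f(v_i)+f(v_{i+1})+\epsilon$ via the triangle inequality through a central vertex or central clique $L_0$, so that the sum collapses to the permutation-independent constant $2\sum_i i|L_i|$ plus endpoint terms, and then recovers the lost $\pm1$'s by counting ``loosely colored'' pairs through an analysis of maximal optimally colored runs (the leftist/balanced/rightist bookkeeping). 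Your route via the exact identity $d(a,b)=|j_a-j_b|+\chi(a,b)$ and the zig-zag linear-arrangement optimum is a legitimate alternative and arguably more transparent about where the extremal orderings come from.

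The one point you should not gloss over is the sentence ``the second summand is handled by a separate counting over block pairs.'' The two summands cannot be maximized independently: the orderings that achieve the zig-zag optimum for $\sum|j_{a_i}-j_{a_{i+1}}|$ constrain which residue patterns, and hence which values of $\sum\chi$, are attainable, and it is exactly this interaction that produces the four cases and the $+1$ corrections (e.g.\ when $\lceil n/m\rceil$ is even the block multiset is unbalanced by $m-s$, and any ordering must pay for this either in the block term or in the residue term, never in neither). This joint optimization is the entire content of the paper's Lemmas on $\alpha_2(\phi)$, and in your framework it still requires a case analysis of comparable length; as written, your plan asserts the conclusion of that analysis rather than supplying it. Everything else in the proposal is calibrated correctly (I checked that your $M(n,m)=n(k+1)-rc_k$ matches the stated formulas in small cases), so the plan is viable, but the separation claim is the step that would fail if taken literally and must be replaced by a joint extremal argument.
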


In this article, we develop a robust graph theoretic tool for the proof. Even though the tool is specifically used to prove our result, it can be adapted to prove bounds for other classes of graphs. Thus, we would like to remark that, the main contribution of this work is not only in proving an important result that captures a significant number of problems with a unified proof, but also in devising a proof technique that has the potential of becoming a standard technique to attack similar problems. 
We will prove the theorem in the next section.

Moreover, our proof of the upper bound is by giving a prescribed radio $k$-coloring of the concerned graph, and then proving its validity, while the lower bound proof establishes its optimality. Therefore, as a corollary to Theorem~\ref{th main}, we can say that our proof provides a linear time algorithm to radio $k$-color powers of paths, optimally. 

\begin{theorem}\label{th algo}
For all $k > diam(P_n^m)$, one can provide an optimal radio $k$-coloring of the graph 
$P_n^m$ in $O(n)$ time. 
\end{theorem}

\subsection{Organization of the paper}
We begin Section~\ref{thrm_proofs} with two naming conventions which are used in the lower and upper bound proofs of Theorem~\ref{th main}. We also present a few lemmas which play a significant role in proving the lower and upper bounds. Further, we prove Theorems~\ref{th main} and~\ref{th algo}. Finally, in Section~\ref{conclusion}, we conclude by stating a few interesting open problems.

A preliminary version of this work (with less detailed proofs) appeared in the proceedings of the IWOCA 2023 conference~\cite{chakraborty2023linear}.

\section{Proofs of Theorems~\ref{th main} and~\ref{th algo}}\label{thrm_proofs}
This section is entirely dedicated to the proofs of Theorems~\ref{th main} and~\ref{th algo}. The proof uses specific notations and terminologies developed to make it easier for the reader to follow. The proof is contained in several observations and lemmas and uses a modified and improved version of the DGNS formula~\cite{das2017lower} applicable for graphs having small diameters, that is, less than or equal to $k$.

As seen from the theorem statement, the graph $P_n^{m}$ that we work on is the $m^{th}$ power of the path
 on $(n+1)$ vertices. One crucial aspect of this proof is the naming of the vertices of $P_n^{m}$. In fact, for convenience, we shall assign \emph{two} names to each of the vertices of the graph and use them 
as required, depending on the context. Such a naming convention will depend on the parity of the diameter of $P_n^m$.

\begin{observation}\label{obs diam}
The diameter of the graph $P_n^{m}$ is $diam(P_n^{m})= \lceil \frac{n}{m}  \rceil$. 
\end{observation}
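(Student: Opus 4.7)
The plan is to label the $n+1$ vertices of the underlying path $P_n$ as $0, 1, \ldots, n$ (which is also a natural ordering for the subsequent sections) and then show that the distance in $P_n^m$ between any two vertices $i$ and $j$ equals $\lceil |i-j|/m \rceil$. The observation then follows because $\lceil |i-j|/m \rceil$ is maximized over $i,j \in \{0,1,\ldots,n\}$ when $\{i,j\} = \{0,n\}$, yielding $\lceil n/m \rceil$.

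First I would record that by definition of the $m$-th power, two vertices $i, j$ are adjacent in $P_n^m$ if and only if $1 \leq |i-j| \leq m$. For the upper bound on distance, I would exhibit an explicit walk from $i$ to $j$ (say with $i<j$) of length $\lceil (j-i)/m \rceil$: take jumps of length $m$ as long as possible and a final shorter jump of length at most $m$ to land at $j$. Since every intermediate step increases the coordinate by at most $m$, this walk lies in $P_n^m$ and has the claimed length.

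For the matching lower bound, I would note that along any path $i = v_0, v_1, \ldots, v_t = j$ in $P_n^m$, we have $|v_{s+1}-v_s| \leq m$ for each $s$, so by the triangle inequality on $\mathbb{Z}$,
\[
|i-j| = \Bigl| \sum_{s=0}^{t-1} (v_{s+1}-v_s) \Bigr| \leq \sum_{s=0}^{t-1} |v_{s+1}-v_s| \leq tm,
\]
forcing $t \geq \lceil |i-j|/m \rceil$. Combining both inequalities gives the distance formula, and setting $|i-j| = n$ completes the proof. There is no real obstacle here; this is a routine verification, and the main care is just to justify the ceiling correctly when $m \nmid n$.
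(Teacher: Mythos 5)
Your proof is correct and complete; the paper itself states this observation without proof, and your argument (showing $d(i,j)=\lceil |i-j|/m\rceil$ via an explicit walk for the upper bound and the step-length bound $|v_{s+1}-v_s|\leq m$ for the lower bound) is exactly the routine verification the paper implicitly relies on. Nothing further is needed.
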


For the rest of this section, we shall fix the notation that $q = \lfloor \frac{diam(P_n^{m})}{2}  \rfloor$.  

\subsection{The naming conventions}
We are now ready to present the first naming convention for the vertices of $P_n^{m}$. For convenience, let us suppose that the vertices of $P_n^{m}$ are placed (embedded) on the $X$-axis having co-ordinates 
$(i,0)$ where $i \in \{0,1, \cdots, n\}$ and two (distinct) vertices are adjacent if and only if their 
Euclidean  distance is at most $m$. 

We start by selecting  the layer $L_0$ consisting of the vertex, named $c_{0}$, say, positioned at $(qm,0)$ for even values of $diam(P_n^{m})$. 
On the other hand, for odd values of $diam(P_n^{m})$, the layer $L_0$ consists of the vertices $c_{0}, c_{1}, \cdots, c_{m}$, say, positioned at $(qm,0), (qm+1,0), \cdots, (qm+m,0)$, respectively, and inducing a maximal clique of size $(m+1)$.
The vertices of $L_0$ are called the \textit{central vertices}, and those positioned to the left and the right side of  the central vertices are naturally called the \textit{left vertices} and the \textit{right vertices}, respectively.

After this, we define the layer $L_i$ as the set of vertices that are at a distance $i$ from $L_0$. Observe that the layer $L_i$ is non-empty for all $i \in \{0, 1, \cdots, q\}$. 
Moreover, notice that, for all $i \in \{1,2, \cdots, q\}$, $L_i$ consists of both left and right vertices. 
In particular, for $i \geq 1$, the left vertices of $L_i$ are named $l_{i1}, l_{i2}, \cdots, l_{im}$, sorted according to the increasing order of their Euclidean  distances from $L_{0}$. 
Similarly, for $i \in \{1,2, \cdots, q-1\}$,
the right vertices of $L_i$ are named $r_{i1}, r_{i2}, \cdots, r_{im}$, sorted according to the increasing order of their Euclidean  distance from $L_{0}$. However, the right vertices of $L_q$ are $r_{q1}, r_{q2}, \cdots, r_{qs}$, where $s=(n+1)-(2q-1)m-|L_0|$ (observe that this $s$ is the same as the $s$ mentioned in the statement of Theorem~\ref{th main}), again sorted according to the increasing order of their Euclidean distances from $L_0$. That is, if $m \nmid n$, then there are $s=(n+1)-(2q-1)m-|L_0|$ right vertices in $L_q$. Besides, every layer $L_i$, for $i \in \{1, 2, \cdots , q-1\}$,  
has exactly $m$ left vertices and $m$ right vertices. This completes our first naming convention.

Now, we move to the second naming convention. This depends on yet another observation. 

\begin{observation}\label{obs distinct}
Let $\phi$ be a radio $k$-coloring of $P_n^{m}$. Then $\phi(x) \neq \phi(y)$ for all distinct $x,y \in V(P_n^{m})$. 
\end{observation}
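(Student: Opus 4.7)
\textbf{Proof proposal for Observation~\ref{obs distinct}.} The plan is a one-line argument that reads the radio $k$-coloring inequality against the standing assumption on $k$. Let $x, y \in V(P_n^m)$ be two distinct vertices and set $d = d(x,y)$. Since $x \neq y$, we have $1 \leq d \leq diam(P_n^m)$. By the defining inequality of a radio $k$-coloring,
\begin{equation*}
|\phi(x) - \phi(y)| \;\geq\; k+1-d \;\geq\; k+1 - diam(P_n^m).
\end{equation*}

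The standing assumption of this section (recalled just before Observation~\ref{obs diam}) is that $k > diam(P_n^m)$ when $diam(P_n^m)$ is even and $k > diam(P_n^m)+1$ when $diam(P_n^m)$ is odd. In either case, $k \geq diam(P_n^m)+1$, so $k+1-diam(P_n^m) \geq 2$. Thus $|\phi(x) - \phi(y)| \geq 2 > 0$, which forces $\phi(x) \neq \phi(y)$.

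I do not expect any real obstacle here: the statement is essentially an immediate consequence of the definition together with the assumption on $k$, and no structural information about $P_n^m$ beyond a diameter bound is required. The only subtlety worth double-checking is that both cases of the hypothesis on $k$ give $k \geq diam(P_n^m)+1$, which they do, so the uniform conclusion $|\phi(x)-\phi(y)| \geq 2$ holds throughout the section.
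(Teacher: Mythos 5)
Your proof is correct and follows essentially the same route as the paper: both simply note that the standing assumption forces $d(x,y) \leq diam(P_n^m) < k$, so the radio $k$-coloring inequality gives $|\phi(x)-\phi(y)| \geq k+1-d(x,y) \geq 1$ (your bound of $2$ is even a bit stronger than needed). No issues.
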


\begin{proof}
As $diam(P_n^{m}) < k$, the distance between any two vertices of $P_n^{m}$ is at most $k-1$. Thus, their colors must differ by a value of at least $1$. 
\end{proof}

Let $\phi$ be a radio $k$-coloring of $P_n^{m}$. 
Thus, due to Observation~\ref{obs distinct}, it is possible to sort the vertices of $P_n^{m}$ 
according to the increasing order of their colors. 
That is, our second naming convention which names the vertices of $P_n^{m}$ as $v_0, v_1, \cdots, v_n$ satisfying $\phi(v_0) < \phi(v_1) < \cdots < \phi(v_n)$. 
Clearly, the second naming convention depends only on the coloring $\phi$, which, for the rest of this section, will play the role of any arbitrary radio $k$-coloring of $P_n^{m}$.

\subsection{The lower bound}
Next, we shall proceed to establish the lower bound of Theorem~\ref{th main} by showing it to be a lower bound of $span(\phi)$. To do so, however, we need to introduce yet another notation. 
Let $f: V(P_n^{m}) \to \{0,1, \cdots, q\}$ be the function which indicates the layer of a vertex, that is, 
$f(x) = i$ if $x \in L_i$. With this notation, we initiate the lower bound proof with the following result.

\begin{lemma}\label{lem layering color gap}
For any $i \in \{0,1, \cdots, n-1\}$, we have 
\begin{equation*}
\phi(v_{i+1})-\phi(v_{i}) \geq 
k - f(v_i)- f(v_{i+1}) +\epsilon,
\end{equation*}
where $\epsilon=1$ if $diam(P_n^m)$ is even and $\epsilon=0$ if $diam(P_n^m)$ is odd.
\end{lemma}

\begin{proof}
If $diam(P_n^{m})$ is even, then $L_0$ consists of the single vertex $c_0$. 
Observe that, as $v_i$ is in $L_{f(v_i)}$, 
it is at a distance $f(v_i)$ from $c_0$. Similarly, $v_{i+1}$ is at a distance $f(v_{i+1})$ from $c_0$. 
Hence, by the triangle inequality, we have 
$$d(v_i, v_{i+1}) \leq d(v_i,c_0) + d(c_0, v_{i+1}) = f(v_i)+f(v_{i+1}).$$ Therefore, by the definition of radio $k$-coloring, $$\phi(v_{i+1})-\phi(v_{i}) \geq k - f(v_i)- f(v_{i+1}) +1.$$

If $diam(P_n^{m})$ is odd, then $L_0$ is a clique. 
Thus, by the definition of layers and the function $f$, 
there exist vertices $c_j$ and $c_{j'}$ in $L_0$ satisfying $d(v_i,c_j) = f(v_i)$ and 
$d(v_{i+1}, c_{j'}) = f(v_{i+1})$. 
Hence, by triangle inequality again, we have 
$$d(v_i, v_{i+1}) \leq d(v_i,c_j) + d(c_j, c_{j'}) + d(c_{j'}, v_{i+1}) = f(v_i)+1+f(v_{i+1}).$$ Therefore, by the definition of radio $k$-coloring, $$\phi(v_{i+1})-\phi(v_{i}) \geq k - f(v_i)- f(v_{i+1}).$$

Hence we are done. 
\end{proof}

Notice that it is not possible to improve the lower bound of the inequality 
presented in Lemma~\ref{lem layering color gap}. 
Motivated by this fact, 
whenever we have 
\begin{equation*}
\phi(v_{i+1})-\phi(v_{i}) = 
k - f(v_i)- f(v_{i+1}) +\epsilon 
\end{equation*}
for some $i \in \{0,1, \cdots, n-1\}$, we say that the pair $(v_i, v_{i+1})$ is \textit{optimally colored}  by $\phi$. 
Moreover, we can naturally extend this definition to a sequence of vertices of the type 
$(v_i, v_{i+1}, \cdots, v_{i+i'})$ by calling it an \textit{optimally colored sequence} by $\phi$ 
if $(v_{i+j},v_{i+j+1})$ is optimally colored by $\phi$ for all $j \in \{0,1,\cdots, i'-1\}$. 
Furthermore, a \textit{loosely colored sequence} $(v_i, v_{i+1}, v_{i+2}, \cdots, v_{i+i'})$ is a sequence 
that does not contain any optimally colored sequence as a subsequence.

An important thing to notice is that the sequence of vertices $(v_0,  v_1,  \cdots, v_n)$ can be 
written as a concatenation of maximal optimally colored sequences and loosely colored sequences. That is, 
it is possible to write 
 $$(v_0,  v_1,  \cdots, v_n) = Y_0 X_1 Y_1 X_2 \cdots X_t Y_t$$
 where $Y_i$s are loosely colored sequences and $X_j$s are maximal optimally colored sequences. Here, we allow the $Y_i$s to be empty sequences as well. 
 In fact, for $1\le i\le t-1$, a $Y_i$ is empty if and only if there exist two consecutive vertices $v_{s'}$ and $v_{s'+1}$ of $P_n^m$ in the second naming convention such that $(v_{s'}, v_{s'+1})$ is loosely colored and that $X_i = (v_s, v_{s+1}, \cdots, v_{s'})$ and 
$X_{i+1}=(v_{s'+1}, v_{s'+2}, \cdots, v_{s''})$ for some $s \leq s' <s''$. Moreover, $Y_0$ (resp. $Y_t$) is empty if and only if the pair $(v_0, v_1)$ (resp. $(v_{n-1}, v_n)$) is optimally colored.
By convention, empty sequences are always loosely colored and a sequence having a singleton vertex is always optimally colored. From now onward, whenever we mention a radio $k$-coloring $\phi$ of $P_n^{m}$, we shall also suppose an associated concatenated sequence using the same notation as mentioned above. 

Let us now prove a result which plays an instrumental role in the proof of the lower bound.

\begin{lemma}\label{lem tighter counting}
Let $\phi$ be a radio-$k$ coloring of $P_n^{m}$ such that 
$$(v_0, v_1, \cdots, v_n)= Y_0 X_1 Y_1 X_2 \cdots X_t Y_t .$$
Then, we have 
$$span(\phi) \geq \left[n(k+\epsilon) - 2 \sum_{i=1}^{q} i |L_i| \right] + \left[f(v_0) + f(v_n) + \sum_{i=0}^{t} |Y_i| + t -1\right],$$
where $|Y_i|$ denotes the length of the sequence $Y_i$, and $\epsilon=1$ for even values of $diam(P_n^m)$ and $\epsilon=0$ for odd values of $diam(P_n^m)$. 
\end{lemma}

\begin{proof}
We know that $span(\phi) = \phi(v_n) - \phi(v_0)$. 
However, we can expand this  difference as
\begin{align*}
span(\phi) &= \phi(v_n) - \phi(v_0) \\ 
           &= (\phi(v_n) - \phi(v_{n-1})) + (\phi(v_{n-1}) - \phi(v_{n-2}))  + \cdots + (\phi(v_1) - \phi(v_{0}))\\
           &= \sum_{i=0}^{n-1} [\phi(v_{i+1}) - \phi(v_i)].
\end{align*}

\noindent
Notice that, by Lemma~\ref{lem layering color gap}, we have 
$$\phi(v_{i+1})-\phi(v_{i}) \geq k - f(v_i)- f(v_{i+1}) + \epsilon $$
 and, if $(v_i,v_{i+1})$ is loosely colored, 
 then  
$$ \phi(v_{i+1})-\phi(v_{i}) > k - f(v_i)- f(v_{i+1}) + \epsilon.$$

Therefore, if 
$$S= \{ v_i: (v_i,v_{i+1}) \text{ is loosely colored, where } 0 \leq i \leq n-1 \},$$
then we have,
\begin{align*}
span(\phi) &= \sum_{i=0}^{n-1} [\phi(v_{i+1}) - \phi(v_i)] \\ 
           &\geq \lvert S \rvert + \sum_{i=0}^{n-1} [k - f(v_i)- f(v_{i+1})+ \epsilon]\\ 
           &= \lvert S \rvert + n(k +\epsilon) - 
           \sum_{i=0}^{n-1} f(v_i)- \sum_{i=0}^{n-1} f(v_{i+1})\\  
           &= \lvert S \rvert + n(k +\epsilon) 
+f(v_0) + f(v_n) -  2\sum_{i=0}^{n} f(v_i)\\   
&= \lvert S \rvert + n(k +\epsilon) +f(v_0) + f(v_n) 
-  2\sum_{i=0}^{q} i|L_i|.
           \end{align*}

Notice that, to count $|S|$ it is enough to count the lengths of the loosely colored sequences, i.e. the $|Y_i|$s, and the number of transitions between the loosely colored and the optimally colored sequences, i.e. between a $Y_i$ and an $X_i$. 
To be precise, we can write 
         \begin{align*}
\lvert S \rvert &= |Y_0| + (|Y_1|+1)+(|Y_2|+1) + \cdots + (|Y_{t-1}|+1) + |Y_t|\\
&= (t-1) + \sum_{i=0}^{t} |Y_t|.
           \end{align*}

Combining the above two equations therefore, we obtain the result. 
\end{proof}

As we shall calculate the two additive components of Lemma \ref{lem tighter counting} separately, we introduce short-hand notations for them for the convenience of reference. So, let
\begin{equation*}
\alpha_1 = 
n(k+\epsilon) - 2 \sum_{i=1}^{q} i |L_i| \end{equation*}
and 
$$\alpha_2(\phi) = f(v_0) + f(v_n) + \sum_{i=0}^{t} |Y_i| + t - 1.$$
Observe that  $\alpha_1$ and $\alpha_2$ are functions of a number of variables and factors 
such as, $n, m, k, \phi, $ etc. However, to avoid clumsy and lengthy formulations, 
we have avoided writing $\alpha_1$ and $\alpha_2$ as multivariate functions, as their definitions are not ambiguous in the current context. Furthermore, as $k$ and $P_n^{m}$ are assumed to be fixed in the current context 
and, as $\alpha_1$ does not depend on $\phi$ (follows from its definition), it is treated and expressed as a constant as a whole. On the other hand, $\alpha_2$ is expressed as a function of $\phi$. 

Now we shall establish lower bounds for $\alpha_1$ and $\alpha_2(\phi)$ separately to prove the lower bound of 
Theorem~\ref{th main}. Let us start with $\alpha_1$ first.

\begin{lemma}\label{lem lower bound alpha1}
We have 
$$\alpha_1=
\begin{cases}
nk - \frac{n^2+m^2-s^2}{2m} &\text{ if } diam(P_n^{m}) \text{ is even,}\\
nk - \frac{n^2-s^2}{2m} &\text{ if } diam(P_n^{m}) \text{ is odd,}
\end{cases}
$$
where $s=(n+1)-(2q-1)m-|L_0|$.  
\end{lemma}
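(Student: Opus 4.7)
The proof is a direct computation from the definition of $\alpha_1$, split into two cases according to the parity of $diam(P_n^m)$. The plan is (i) to read off the cardinalities $|L_i|$ from the first naming convention, (ii) to evaluate $\sum_{i=1}^{q} i\,|L_i|$ in closed form, and (iii) to substitute and simplify using the relation between $n$, $q$, $m$, $s$ obtained from counting the total number of vertices.

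In the even case $diam(P_n^m) = 2q$, the naming convention gives $|L_0| = 1$, $|L_i| = 2m$ for $1 \le i \le q-1$, and $|L_q| = m + s$, where $s$ is the number of right vertices in $L_q$. A routine arithmetic-series computation yields $\sum_{i=1}^q i\,|L_i| = 2m\cdot\frac{(q-1)q}{2} + q(m+s) = mq^2 + qs$, so
$$\alpha_1 \;=\; n(k+1) - 2(mq^2 + qs) \;=\; nk + n - 2mq^2 - 2qs.$$
Counting vertices gives $n+1 = 1 + 2m(q-1) + (m+s)$, that is, $n = (2q-1)m + s$; this is exactly the defining identity for $s$ in the naming convention. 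Substituting $n = (2q-1)m + s$ into $\tfrac{n^2 + m^2 - s^2}{2m}$ and simplifying, I would verify that this quantity equals $2mq^2 + 2qs - n$, which rearranges precisely to $\alpha_1 = nk - \tfrac{n^2 + m^2 - s^2}{2m}$.

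The odd case $diam(P_n^m) = 2q+1$ is analogous, the only change being that $L_0$ is now a maximal clique of size $m+1$, so $|L_0| = m+1$, while $|L_i| = 2m$ for $1 \le i \le q-1$ and $|L_q| = m+s$ as before. The sum again evaluates to $mq^2 + qs$, giving $\alpha_1 = nk - 2mq^2 - 2qs$. The vertex count now yields $n = 2qm + s$, whence a direct expansion shows $\tfrac{n^2 - s^2}{2m} = 2mq^2 + 2qs$, matching the claimed formula.

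There is no conceptual obstacle here: the proof is essentially bookkeeping. The only care required is to keep the two different values of $|L_0|$ (depending on the parity of the diameter) straight so that the correct identity between $n$ and $(q,m,s)$ is used in each case; the extra ``$+m^2$'' in the numerator of the even-case formula traces precisely to $|L_0| = 1$ rather than $m+1$.
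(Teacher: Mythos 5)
Your proof is correct and follows exactly the route the paper intends: substitute $|L_i|=2m$ for $1\le i\le q-1$ and $|L_q|=m+s$ into the definition of $\alpha_1$, evaluate the arithmetic sum, and simplify via $n=(2q-1)m+s$ (even case) or $n=2qm+s$ (odd case). The paper's own proof is just a terse statement of this same computation, so your write-up simply supplies the details it omits.
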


\begin{proof}
Notice that $|L_i|=2m$ for all $i \in \{1, 2, \cdots, q-1\}$ and $|L_q|=m+s$. So, simply replacing these values in the definition of $\alpha_1$ and using the relation 
$s=n-(2q-1+\epsilon)m$, where $\epsilon=0$ for even values of $diam(P_n^{m})$ and 
$\epsilon=1$ for odd values of $diam(P_n^{m})$, gives us the result. 
\end{proof}

Next, we focus on $\alpha_2(\phi)$. We shall handle the cases with odd
$diam(P_n^{m})$ first.

\begin{lemma}\label{lem lower bound alpha2 odd diam}
We have 
$$\alpha_2(\phi) \geq 
\begin{cases}
0 &\text{ if } diam(P_n^{m}) \text{ is odd and } m|n,\\
1 &\text{ if } diam(P_n^{m}) \text{ is odd and } m \nmid n.
\end{cases}
$$  
\end{lemma}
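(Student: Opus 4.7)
The plan is as follows. First, note that in
\[
\alpha_2(\phi) = f(v_0) + f(v_n) + \sum_{i=0}^{t}|Y_i| + (t-1),
\]
every summand is nonnegative apart from the trailing $-1$. If $t=0$ then $|Y_0|=n+1$ and $\alpha_2(\phi)\geq n$; if $t\geq 1$ then every summand (including $t-1$) is at least $0$. So $\alpha_2(\phi)\geq 0$, handling the first case ($m\mid n$).

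For the second case ($m\nmid n$), I argue by contradiction: suppose $\alpha_2(\phi)=0$. Then $t=1$, $Y_0=Y_1=\emptyset$, and $f(v_0)=f(v_n)=0$, so $(v_0,\ldots,v_n)$ is a single optimally colored sequence with $v_0,v_n\in L_0$, and by Observation~\ref{obs distinct} every vertex of $L_0$ appears in it. The core technical step is to classify the optimally colored consecutive pairs. Define the \emph{residue} of a non-central vertex $l_{ij}$ or $r_{ij}$ to be its second index $j\in\{1,\ldots,m\}$. Sharpening the triangle-inequality argument of Lemma~\ref{lem layering color gap} with a ceiling computation of $d(v,v')=\lceil|p_v-p_{v'}|/m\rceil$, I will show: (a) two non-central vertices on the same side of $L_0$ are never optimally paired; (b) a left-vertex of residue $r_L$ and a right-vertex of residue $r_R$ are optimally paired if and only if $r_L+r_R\geq m+1$; (c) $c_j$ and a left-vertex of residue $r_L$ are optimally paired if and only if $j+r_L\geq m+1$; (d) symmetrically, $c_j$ and a right-vertex of residue $r_R$ are optimally paired if and only if $(m-j)+r_R\geq m+1$. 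In particular, a left-vertex of residue $1$ can only be walk-adjacent to $c_m$ or to a right-vertex of residue $m$.

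The contradiction then comes from an edge-counting argument. Consider $\mathcal{L}_1:=\{l_{i1}:1\leq i\leq q\}$ of size $q$ (in the non-degenerate range $q\geq 1$). Each element of $\mathcal{L}_1$ is interior in the walk (it is not in $L_0$, hence not $v_0$ or $v_n$) and so contributes two walk edge-endpoints, all of which must land in $\mathcal{S}_1:=\{r_{im}:1\leq i\leq q-1\}\cup\{c_m\}$; here the hypothesis $m\nmid n$ is decisive, producing only $q-1$ right-vertices of residue $m$ and therefore $|\mathcal{S}_1|=q$. Since each element of $\mathcal{S}_1$ has walk-degree at most $2$, the walk-degree capacity of $\mathcal{S}_1$ is at most $2q$, matching the $2q$ demand from $\mathcal{L}_1$. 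Tightness then forces $c_m$ to be interior with both walk-neighbours in $\mathcal{L}_1$ and each $r_{im}$ to have both walk-neighbours in $\mathcal{L}_1$. Hence $\mathcal{S}:=\mathcal{L}_1\cup\mathcal{S}_1$ is closed under walk-adjacency and consists entirely of interior vertices, so the walk-edges with both endpoints in $\mathcal{S}$ form a $2$-regular graph on $\mathcal{S}$. But the walk-edges as a whole form a path (a tree) whose endpoints $v_0,v_n$ lie in $L_0\setminus\{c_m\}\subset V(P_n^m)\setminus\mathcal{S}$, and no nonempty induced subgraph of a tree is $2$-regular -- the desired contradiction.

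The main obstacle I anticipate is establishing the four residue conditions (a)--(d) cleanly, since each is a delicate case analysis of when $\lceil x/m\rceil$ and $\lceil (x\pm y)/m\rceil$ differ by the expected amount. Once they are in place, the counting step is short, and the hypothesis $m\nmid n$ enters exactly to shrink $|\mathcal{S}_1|$ from $q+1$ to $q$, matching $|\mathcal{L}_1|=q$ and forcing the trapping configuration.
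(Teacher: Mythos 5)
Your proof is correct and follows essentially the same route as the paper's: the trivial nonnegativity for $m\mid n$, and for $m\nmid n$ the observation that each of the $q$ left vertices of residue $1$ can only be optimally paired with a vertex of $\{c_m\}\cup\{r_{jm}:1\leq j\leq q-1\}$, a set of size exactly $q$ because $m\nmid n$. Your finishing step (degree-counting forcing a $2$-regular subgraph inside the Hamiltonian path) is a cleaner and more rigorous way to land the same contradiction that the paper extracts by examining the endpoints of the forced alternating subsequence.
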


\begin{proof}
First of all, notice that there is nothing to prove when $ diam(P_n^{m})$ is odd and $m|n$ as $\alpha_2(\phi)$ is always non-negative by definition. However, when $ diam(P_n^{m})$ is odd and $m \nmid n$, it is enough to show that $\alpha_2(\phi) \neq 0$. Suppose the contrary, that is, $\alpha_2(\phi) = 0$. Then, we must have both $f(v_0)=f(v_n)=0$ and $(v_0, v_1, \cdots, v_n) = Y_0X_1Y_1$
having $Y_0=Y_1= \emptyset $. That is, both $v_0$ and $v_n$ must be from $L_0$ and the whole sequence 
$(v_0, v_1, \cdots, v_n)$ must be an optimally colored sequence.

Observe that if $l_{i1}$, for any $i \in \{1,2, \cdots, q\}$, is an element of an optimally colored pair, then the other element must be either $c_m$ or $r_{jm}$ for some $j \in \{1,2,\cdots,q-1\}$. This follows 
from the distance constraints and the definition of an optimally colored pair of vertices. On the other hand, a pair of vertices in which one is $c_m$ and the other is a right vertex is not an optimally colored pair of vertices. Moreover, any pair of left vertices $(l_{ia},l_{i'a'})$ or any pair of right vertices $(r_{jb},r_{j'b'})$ are also loosely colored each.

Thus, $X_1$ must contain a contiguous subsequence of the form $(a_1, b_1, a_2, b_2,\\ \cdots,  a_q, b_q)$ 
where $a_i$s (resp., $b_j$s) are from  
$\{l_{11}, l_{21}, \cdots, l_{q1}\}$ and  
$b_j$s (resp., $a_i$s)  are from 
$\{c_m, r_{1m}, r_{2m},$ 
$\cdots, r_{(q-1)m}\}$.

If $a_1 \in \{l_{11}, l_{21}, \cdots, l_{q1}\}$, then $a_1 \neq v_0$, as $f(v_0)=0 \neq f(a_1)$. 
Thus $a_1 = v_i$ for some $i \geq 1$. This is not possible as $v_{i-1}$ cannot be from 
the set $\{c_m, r_{1m}, r_{2m},$ 
$\cdots, r_{(q-1)m}\}$ and therefore, the pair $(v_{i-1}, v_i)$ is not optimally colored, a contradiction. Hence, $\alpha_2(\phi) \neq 0$.

Similarly, we can arrive at a contradiction if $b_q \in \{l_{11}, l_{21}, \cdots, l_{q1}\}$ and so, $\alpha_2(\phi) \neq 0$ in this case as well. Hence, we are done. 
\end{proof}

Next, we consider the cases with even $diam(P_n^{m})$. Before starting with it though, we are going to introduce some terminologies 
to be used during the proofs. So, let $X_i$ be an optimally colored sequence. As $X_i$ cannot have two consecutive left (resp., right) vertices as elements, the number of left vertices can be at most one more than the number of right vertices and the central vertex, the latter two combined together. Based on this observation, if the number of left vertices is more, equal, or less than the number of right vertices and the central vertex combined in $X_i$, then $X_i$ is called a \textit{leftist}, \textit{balanced}, or \textit{rightist} sequence, respectively.

\begin{lemma}\label{lem lower bound alpha2 even diam}
We have 
$$\alpha_2(\phi) \geq 
\begin{cases}
1 &\text{ if } diam(P_n^{m}) \text{ is even and } m|n,\\
m-s+1 &\text{ if } diam(P_n^{m}) \text{ is even and } m \nmid n,
\end{cases}
$$
where $s \equiv n \pmod m$.  
\end{lemma}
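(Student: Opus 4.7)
The plan is to run a counting argument based on the classification of maximal optimally colored sequences into leftist, balanced, or rightist (using the terminology introduced just before the lemma), exploiting the imbalance between the left vertex count $L = qm$ and the combined right-plus-central count $R + C = (q-1)m + s + 1$. The first step is to prove, for every maximal optimally colored sequence $X_i$, the structural inequality
\[ a_L^{(i)} - a_R^{(i)} - a_C^{(i)} \leq 1, \]
with equality precisely for leftist sequences, by a direct analysis of the admissible label patterns (no $LL$ or $RR$ adjacency, and $c_0$ appearing at most once). A useful corollary, obtained by peeling off the first or last vertex of a hypothetical leftist $X_i$, is that a leftist opt sequence both begins and ends with a left vertex (otherwise the remaining opt suffix would violate the bound with value $2$).

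Summing the structural inequality over all opt sequences yields
\[ L_X - R_X - C_X \leq \ell - r, \]
where $\ell, b, r$ denote the numbers of leftist, balanced, and rightist opt sequences. Substituting $L_X = L - L_Y$, $R_X = R - R_Y$, $C_X = 1 - C_Y$ and using $L - R - C = m - s - 1$, this rearranges into $L_Y + \ell \geq (m - s - 1) + R_Y + C_Y + r$. Plugging this into
\[ \alpha_2(\phi) = f(v_0) + f(v_n) + (L_Y + R_Y + C_Y) + (\ell + b + r) - 1 \]
produces the master bound
\[ \alpha_2(\phi) \geq f(v_0) + f(v_n) + (m - s - 2) + 2r + 2R_Y + 2C_Y + b. \]

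For $m \mid n$ the claim reduces to $\alpha_2 \geq 1$, which can be read off directly from the definition of $\alpha_2$ together with $f(v_0) + f(v_n) \geq 1$ (forced by $v_0 \neq v_n$ and $|L_0| = 1$), after handling the edge case $t = 0$ separately. For $m \nmid n$ the target $\alpha_2 \geq m - s + 1$ is equivalent to $f(v_0) + f(v_n) + 2r + 2R_Y + 2C_Y + b \geq 3$. The master bound combined with $f(v_0) + f(v_n) \geq 1$ settles every scenario except the degenerate ones in which $r = R_Y = C_Y = 0$, $b \in \{0, 1\}$, and $f(v_0) + f(v_n) \leq 2$. In these, every opt sequence is leftist (with at most one balanced), every right vertex and $c_0$ lies inside an opt sequence, and hence every loose block $Y_i$ consists only of left vertices; so by the start/end-with-$L$ property $v_0$ and $v_n$ are themselves left vertices, giving $f(v_0), f(v_n) \geq 1$.

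The main obstacle is ruling out the residual case $f(v_0) = f(v_n) = 1$, which I plan to handle by exploiting the restricted opt-partner set of each index-$1$ left vertex $l_{a1}$, whose only opt partners are $c_0$ and the vertices $r_{bm}$ for $b \leq q - 1$ (the vertex $r_{qm}$ being absent when $s < m$). This yields exactly $q$ admissible partners for the $q$ vertices $l_{11}, \dots, l_{q1}$; a careful slot-counting inside the leftist opt sequences—combined with the symmetric constraint coming from the index-$1$ right vertices $r_{b1}$, which themselves compete for $c_0$ and for the $l_{am}$'s—shows that such a tight configuration cannot be embedded consistently in a sequence whose endpoints both lie in layer~$1$. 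This produces the missing extra unit in $\alpha_2$ and completes the bound.
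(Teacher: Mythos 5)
Your counting framework is sound and is essentially a cleaner, unified packaging of what the paper does: the paper splits into two cases (whether or not every right and central vertex lies in an optimally colored sequence) and runs the leftist/balanced/rightist count only in the second case, whereas your master bound
$\alpha_2(\phi) \geq f(v_0)+f(v_n)+(m-s-2)+2r+2R_Y+2C_Y+b$
absorbs both at once. The structural inequality $a_L^{(i)}-a_R^{(i)}-a_C^{(i)}\leq 1$ and the reduction to showing $f(v_0)+f(v_n)+2r+2R_Y+2C_Y+b\geq 3$ are correct, and the observation you plan to exploit (that the only opt-partners of an index-$1$ vertex on one side are $c_0$ and the index-$m$ vertices on the other side) is exactly the structural fact the paper uses.

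However, there is a genuine gap: the residual configuration is where all the difficulty of the lemma lives, and you do not actually rule it out --- you only assert that ``a careful slot-counting \dots shows that such a tight configuration cannot be embedded consistently.'' This cannot be waved away, because the residual configuration (all opt sequences leftist, all right vertices and $c_0$ inside them, $f(v_0)=f(v_n)=1$) achieves exactly $\alpha_2=m-s$ at the level of your counting, so the missing unit must come from an explicit structural contradiction. The argument that works (and is implicit in the paper) runs through the \emph{right} index-$1$ vertices, not the left ones: since $R_Y=0$ and leftist sequences begin and end with left vertices, every $r_{b1}$ is interior, so it needs \emph{two} neighbours from the $(q+1)$-element set $\{c_0,l_{1m},\dots,l_{qm}\}$; the alternating components formed by the $q$ vertices $r_{b1}$ and their neighbours require $q+(\#\text{components})$ partners, forcing a single contiguous block $s_0\,r\,s_1\cdots r\,s_q$ that uses all of the partner set including $c_0$, and such a block contributes $a_L-a_R-a_C=-1$, so the sequence containing it cannot be leftist. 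Note that the left-side version of this argument, which you foreground, does not work on its own: the $l_{a1}$'s may sit inside loose blocks (which in this regime consist only of left vertices), so nothing forces them into opt sequences. A secondary, smaller hole: in the sub-case $b=1$ you invoke the start/end-with-$L$ property to conclude $f(v_0),f(v_n)\geq 1$, but that property was proved only for leftist sequences; the one balanced sequence could begin with $c_0$, giving $f(v_0)=0$, and this sub-case needs its own (short) contradiction argument.
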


\begin{proof}
For even values of $diam(P_n^{m})$, $L_0$ consists of only the vertex $c_0$. Therefore, at most one of $v_0$ and $v_n$ can be equal to $c_0$ implying $f(v_0)+f(v_n) \geq 1$. This proves the case when $m | n$. So, let us now focus on the case when $m \nmid n$.

We know that there are exactly $(q-1)m+s$ right vertices and one central vertex $c_0$.
Suppose that at most $(q-1)m+s$ vertices among the set of right and central vertices are part of optimally colored sequences of $(v_0, v_1, \cdots, v_n)$.
Thus, the total number of vertices across the $t$ optimally colored sequences will be
$$\sum_{i=1}^{t} |X_i| \leq 2(q-1)m +2s +t.$$ 
That leaves us with 
$$\sum_{i=0}^{t} |Y_i| \geq [(2q-1)m+s+1] - [2(q-1)m +2s +t] = m-s+1-t.$$
Recall that $f(v_0)+f(v_n)\geq 1$. Hence, 
$$\alpha_2(\phi) = f(v_0)+f(v_n) + \sum_{i=0}^{t} |Y_i| +t -1 \geq m-s+1.$$

Therefore, we are left with the case when all $(q-1)m+s+1$ right and central vertices are part of optimally colored sequences of $(v_0, v_1, \cdots, v_n)$.
Suppose that the number of leftist, balanced, and rightist sequences are 
$t_1$, $t_2$, and $t_3$, respectively, where $t_1+t_2+t_3=t$.
In this case
$$\sum_{i=1}^{t} |X_i| \leq 2(q-1)m+2s+2+ t_1 - t_3.$$ 
That leaves us with 
$$\sum_{i=0}^{t} |Y_i| \geq [(2q-1)m+s+1] - [2(q-1)m +2s +2+t_1-t_3]=(m-s-1)-(t_1-t_3).$$
Hence, 
$$\alpha_2(\phi) \geq f(v_0)+f(v_n) + \sum_{i=0}^{t} |Y_i| +t -1 \geq (m-s-2) + [f(v_0)+f(v_n)+t-t_1+t_3].$$
Thus, it is enough to show that 
\begin{equation}\label{eq enough}
[f(v_0)+f(v_n)+t-t_1+t_3] \geq 3.
\end{equation}
As $f(v_0)+f(v_n) \geq 1$, the Equation~(\ref{eq enough}) will be satisfied if there is one rightist sequence, or two balanced sequences. Furthermore, if $f(v_0)+f(v_n) \geq 2$, then 
 equation~(\ref{eq enough}) will be satisfied if there is one rightist or balanced sequence.

Notice that, if $r_{i1}$, for any $i \in \{1,2, \cdots, q\}$, is an element of an optimally colored pair, then the other element must be either $c_0$ or $l_{jm}$ for some $j \in \{1,2,\cdots,q\}$. 
We know that all right vertices, in particular, $r_{11}, r_{21}, \cdots, r_{q1}$, are part of some 
optimally colored sequences. 
Observe that, if they are distributed over two or more optimally colored sequences, then 
due to the above property, either one of those sequences will be rightist, or two of the sequences will be balanced. 

Moreover, if they are part of one optimally colored sequence $X_i$, 
then that sequence cannot be leftist. Furthermore, if the first or the last vertex of $X_i$ is $c_0$, then $X_i$ is rightist.  
Thus, in any case, equation~(\ref{eq enough}) is satisfied. Hence we are done. 
\end{proof}

Combining Lemmas \ref{lem tighter counting}, \ref{lem lower bound alpha1}, \ref{lem lower bound alpha2 even diam} and \ref{lem lower bound alpha2 odd diam}, therefore, we have the following lower bound for the parameter $rc_k(P_n^m)$.

\begin{lemma}\label{lem lower bound}
For all $k \geq diam(P_n^m)$, we have

$$rc_k(P_n^m)\geq
\begin{cases}
nk - \frac{n^2-m^2}{2m} &\text{ if } \lceil \frac{n}{m} \rceil \text{ is odd and } m|n,\\
nk - \frac{n^2-s^2}{2m} + 1 &\text{ if } \lceil \frac{n}{m} \rceil \text{ is odd and } m \nmid n,\\
nk - \frac{n^2}{2m} + 1 &\text{ if } \lceil \frac{n}{m}\rceil \text{ is even and } m|n,\\
nk - \frac{n^2-(m-s)^2}{2m} + 1 &\text{ if } \lceil \frac{n}{m} \rceil \text{ is even and } m \nmid n,
\end{cases}
$$
where $s \equiv n \pmod m$.  
\end{lemma}

\subsection{The upper bound}
Now let us prove the upper bound. 
We shall provide a radio $k$-coloring $\psi$ of $P_n^{m}$ and show that its span is the same as the
value of $rc_k(P_n^{m})$ stated in Theorem~\ref{th main}.
To define $\psi$, 
we shall use both the naming conventions. That is, we shall express the ordering 
$(v_0, v_1,  \cdots, v_n)$ of the vertices of $P_n^m$ with respect to $\psi$ in terms of the first naming convention.

Let us define a few ordering for the right (and similarly for the left) vertices:
\begin{enumerate}[(1)]
\item  $r_{ij} \prec_1 r_{i'j'}$ if either 
(i) $j < j'$ or (ii) $j= j'$ and $(-1)^{j-1} i < (-1)^{j'-1} i'$;

\item $r_{ij} \prec_2 r_{i'j'}$ if either 
(i) $j < j'$ or (ii) $j= j'$ and $(-1)^{m-j} i < (-1)^{m-j'} i'$;

\item $r_{ij} \prec_3 r_{i'j'}$ if either (i) $j < j'$ or (ii) $j= j'$ and $ i > i'$; and

\item $r_{ij} \prec_4 r_{i'j'}$ if either (i) $j < j'$ or (ii) $j= j'$ and $(-1)^{j} i < (-1)^{j'}i'$.
\end{enumerate}

Observe that, the orderings are based on comparing the second co-ordinate of the indices of the right (resp., left) vertices, and if they happen to be equal, then comparing the first co-ordinate of the indices with conditions on their parities. 
Moreover, all the above four orderings define total orders on the set of all right (resp., left) vertices. 
Thus, there is a unique increasing (resp., decreasing) sequence of 
right (or left) vertices with respect to $\prec_1$, $\prec_2$, $\prec_3$, and $\prec_4$.  Based on these orderings, we are going to 
construct a sequence of vertices of the graph and then greedy color the
vertices to provide our labeling. 

The sequences of the vertices are given as follows:
\begin{enumerate}[(1)]
\item An \textit{alternating chain} as a sequence of vertices of the form 
$(a_1, b_1, a_2, b_2, \cdots,$\\$a_{p}, b_{p})$ such that $(a_1, a_2, \cdots, a_p)$ is the increasing sequence of right vertices with respect to $\prec_1$ and 
$(b_1, b_2, \cdots, b_p)$ is the decreasing sequence of left vertices with respect to $\prec_2$.

\item A \textit{canonical chain} as a sequence of vertices of the form 
$(a_1, b_1, a_2, b_2, \cdots, a_{p}, \\ b_{p})$ such that $(a_1, a_2, \cdots, a_p)$ is the increasing sequence of right vertices with respect to $\prec_3$ and 
$(b_1, b_2, \cdots, b_p)$ is the decreasing sequence of left vertices with respect to $\prec_3$;

\item A \textit{special alternating chain} as a sequence of vertices of the form 
$(a_1, b_1, a_2, b_2, \\\cdots, a_{p}, b_{p})$ such that $(a_1, a_2, \cdots, a_p)$ is the increasing sequence of right vertices with respect to $\prec_2$ and 
$(b_1, b_2, \cdots, b_p)$ is the decreasing sequence of left vertices with respect to $\prec_1$; and

\item A \textit{special canonical chain} as a sequence of vertices of the form 
$(a_1, b_1, a_2, b_2, \\ \cdots, a_{p}, b_{p})$ such that $(a_1, a_2, \cdots, a_p)$ is the increasing sequence of right vertices with respect to $\prec_4$ and 
$(b_1, b_2, \cdots, b_p)$ is the decreasing sequence of left vertices with respect to $\prec_4$.

\end{enumerate}

\medskip

Notice that all the above four chains can exist only when the number of right and left vertices are equal. Of course, when $m|n$, all the chains exist. Otherwise, we shall modify the  names of the vertices a little to make them exist.

We are now ready to express the sequence $(v_0, v_1, \cdots, v_n)$ by splitting it into different cases which are depicted in Figures~\ref{fig:Case 1},~\ref{fig:Case 2},~\ref{fig:Case 3} and~\ref{fig:Case 4} for example. In the figures, both naming conventions for each of the vertices are given so that the reader may cross verify the correctness for that particular instance for each case. For convenience, also recall that 
$q = \lfloor \frac{diam(P_n^{m})}{2}  \rfloor$.

\medskip

\noindent \textit{Case 1: when $diam(P_n^{m})$ is even, $m|n$ and $k>diam(P_n^{m})$.} 
First of all, $(v_0, v_1, \cdots,\\ v_{2qm-1})$ is the alternating chain. Moreover, $v_n = c_0$.

\begin{figure}[h]
	\centering
	\begin{tikzpicture}[scale=.9]
	\foreach \x in {-8,...,8} \foreach \y in {0}{
				\node[circle, draw=black, scale=.3, fill=black] (\x) at (\x,\y){};}

	\node  at (-8,-.3) {\scriptsize $l_{24}$};
	\node  at (-7,-.3) {\scriptsize $l_{23}$};
	\node  at (-6,-.3) {\scriptsize $l_{22}$};
	\node  at (-5,-.3) {\scriptsize $l_{21}$};
	\node  at (-4,-.3) {\scriptsize $l_{14}$};
	\node  at (-3,-.3) {\scriptsize $l_{13}$};
	\node  at (-2,-.3) {\scriptsize $l_{12}$};
	\node  at (-1,-.3) {\scriptsize $l_{11}$};
	\node  at (0,-.3) {\scriptsize $c_{0}$};
	\node  at (1,-.3) {\scriptsize $r_{11}$};
	\node  at (2,-.3) {\scriptsize $r_{12}$};
	\node  at (3,-.3) {\scriptsize $r_{13}$};
	\node  at (4,-.3) {\scriptsize $r_{14}$};
	\node  at (5,-.3) {\scriptsize $r_{21}$};
	\node  at (6,-.3) {\scriptsize $r_{22}$};
	\node  at (7,-.3) {\scriptsize $r_{23}$};
	\node  at (8,-.3) {\scriptsize $r_{24}$};

    \node  at (-8,.3) {\scriptsize $4$};
	\node  at (-7,.3) {\scriptsize $28$};
	\node  at (-6,.3) {\scriptsize $36$};
	\node  at (-5,.3) {\scriptsize $60$};
	\node  at (-4,.3) {\scriptsize $11$};
	\node  at (-3,.3) {\scriptsize $19$};
	\node  at (-2,.3) {\scriptsize $43$};
	\node  at (-1,.3) {\scriptsize $51$};
	\node  at (0,.3) {\scriptsize $65$};
	\node  at (1,.3) {\scriptsize $0$};
	\node  at (2,.3) {\scriptsize $24$};
	\node  at (3,.3) {\scriptsize $32$};
	\node  at (4,.3) {\scriptsize $56$};
	\node  at (5,.3) {\scriptsize $7$};
	\node  at (6,.3) {\scriptsize $15$};
	\node  at (7,.3) {\scriptsize $39$};
	\node  at (8,.3) {\scriptsize $47$};

        \node  at (-8,-.8) {\scriptsize $v_{1}$};
	\node  at (-7,-.8) {\scriptsize $v_{7}$};
	\node  at (-6,-.8) {\scriptsize $v_{9}$};
	\node  at (-5,-.8) {\scriptsize $v_{15}$};
	\node  at (-4,-.8) {\scriptsize $v_{3}$};
	\node  at (-3,-.8) {\scriptsize $v_{5}$};
	\node  at (-2,-.8) {\scriptsize $v_{11}$};
	\node  at (-1,-.8) {\scriptsize $v_{13}$};
	\node  at (0,-.8) {\scriptsize $v_{16}$};
	\node  at (1,-.8) {\scriptsize $v_{0}$};
	\node  at (2,-.8) {\scriptsize $v_{6}$};
	\node  at (3,-.8) {\scriptsize $v_{8}$};
	\node  at (4,-.8) {\scriptsize $v_{14}$};
	\node  at (5,-.8) {\scriptsize $v_{2}$};
	\node  at (6,-.8) {\scriptsize $v_{4}$};
	\node  at (7,-.8) {\scriptsize $v_{10}$};
	\node  at (8,-.8) {\scriptsize $v_{12}$};

	\foreach \x/\y in {-8/-7,-7/-6,-6/-5,-5/-4,-4/-3,-3/-2,-2/-1,-1/0,0/1,1/2,2/3,3/4,4/5,5/6,6/7,7/8} 
	\draw[-,draw=black, opacity=.5] (\x) -- (\y);

	\foreach \x/\y/\z/\w in {-8/-4/-7/-5,-4/0/-3/-1,0/4/1/3,4/8/5/7,
	-7/-3/-6/-4,-3/1/-2/0,1/5/2/4,
    -6/-2/-5/-3,-2/2/-1/1,2/6/3/5,
    -5/-1/-4/-2,-1/3/0/2,3/7/4/6}  
	\draw[-,draw=gray, opacity=.2,line width=0.05mm] (\x) .. controls (\z,1.5) and (\w,1.5) .. (\y);

	\foreach \x/\y/\z/\w in {-6/-3/-5/-4,-3/0/-2/-1,0/3/1/2,3/6/4/5,
		-8/-5/-7/-6,-5/-2/-4/-3,-2/1/-1/0,1/4/2/3,4/7/5/6,
		-7/-4/-6/-5,-4/-1/-3/-2,-1/2/0/1,2/5/3/4,5/8/6/7}  
	\draw[-,draw=gray, opacity=.2,line width=0.05mm] (\x) .. controls (\z,.9) and (\w,.9) .. (\y);

	\foreach \x/\y/\z in {-8/-6/-7,-6/-4/-5,-4/-2/-3,-2/0/-1,0/2/1,2/4/3,4/6/5,6/8/7,
		-7/-5/-6,-5/-3/-4,-3/-1/-2,-1/1/0,1/3/2,3/5/4,5/7/6}
	\draw[-,draw=gray, opacity=.2,line width=0.05mm] (\x) .. controls (\z,.4) .. (\y);

	\draw[blue, thin] (-.35,-1) rectangle (.35,.5);
   	\draw[red,thin] (.65,-1) rectangle (4.35,.5);
   	\draw[red,thin] (-.65,-1) rectangle (-4.35,.5);
   	\draw[brown(traditional),thin] (4.65,-1) rectangle (8.35,.5);
	\draw[brown(traditional),thin] (-4.65,-1) rectangle (-8.35,.5);
	
    \node  at (0,-1.3) {\scriptsize \textcolor{blue}{$L_0$}};
	\node  at (2.5,-1.3) {\scriptsize \textcolor{red}{$L_1$}};
	\node  at (-2.5,-1.3) {\scriptsize \textcolor{red}{$L_1$}};
	\node  at (6.5,-1.3) {\scriptsize \textcolor{brown(traditional)}{$L_2$}};
	\node  at (-6.5,-1.3) {\scriptsize \textcolor{brown(traditional)}{$L_2$}};

\end{tikzpicture}
\caption{\textit{Case 1.} $n=16$, $m=4$, $diam(P_{16}^4)=4$, $k=6$.}
\label{fig:Case 1}
\end{figure}

\medskip

\noindent \textit{Case 2: when $diam(P_n^{m})$ is odd, $m|n$ and $k> diam(P_n^{m})$.}
Let the ordering of the vertices be $(v_0, v_1, \cdots, v_{2qm+m})$. Now, $v_{j(2q+1)} = c_j$ for all $0 \leq j \leq m$. The remaining vertices follow the canonical chain.

\begin{figure}[h]
	\centering
	\begin{tikzpicture}[scale=.77]
		\foreach \x in {-10,...,10} \foreach \y in {0}{
			\node[circle, draw=black, scale=.3, fill=black] (\x) at (\x,\y){};}

		\node  at (-10,-.3) {\scriptsize $l_{24}$};
		\node  at (-9,-.3) {\scriptsize $l_{23}$};
		\node  at (-8,-.3) {\scriptsize $l_{22}$};
		\node  at (-7,-.3) {\scriptsize $l_{21}$};
		\node  at (-6,-.3) {\scriptsize $l_{14}$};
		\node  at (-5,-.3) {\scriptsize $l_{13}$};
		\node  at (-4,-.3) {\scriptsize $l_{12}$};
		\node  at (-3,-.3) {\scriptsize $l_{11}$};
		\node  at (-2,-.3) {\scriptsize $c_{0}$};
		\node  at (-1,-.3) {\scriptsize $c_{1}$};
		\node  at (0,-.3) {\scriptsize $c_{2}$};
		\node  at (1,-.3) {\scriptsize $c_{3}$};
		\node  at (2,-.3) {\scriptsize $c_{4}$};
		\node  at (3,-.3) {\scriptsize $r_{11}$};
		\node  at (4,-.3) {\scriptsize $r_{12}$};
		\node  at (5,-.3) {\scriptsize $r_{13}$};
		\node  at (6,-.3) {\scriptsize $r_{14}$};
		\node  at (7,-.3) {\scriptsize $r_{21}$};
		\node  at (8,-.3) {\scriptsize $r_{22}$};
		\node  at (9,-.3) {\scriptsize $r_{23}$};
		\node  at (10,-.3) {\scriptsize $r_{24}$};

        \node  at (-10,.3) {\scriptsize $18$};
		\node  at (-9,.3) {\scriptsize $41$};
		\node  at (-8,.3) {\scriptsize $64$};
		\node  at (-7,.3) {\scriptsize $87$};
		\node  at (-6,.3) {\scriptsize $9$};
		\node  at (-5,.3) {\scriptsize $32$};
		\node  at (-4,.3) {\scriptsize $55$};
		\node  at (-3,.3) {\scriptsize $78$};
		\node  at (-2,.3) {\scriptsize $0$};
		\node  at (-1,.3) {\scriptsize $23$};
		\node  at (0,.3) {\scriptsize $46$};
		\node  at (1,.3) {\scriptsize $69$};
		\node  at (2,.3) {\scriptsize $92$};
		\node  at (3,.3) {\scriptsize $14$};
		\node  at (4,.3) {\scriptsize $37$};
		\node  at (5,.3) {\scriptsize $60$};
		\node  at (6,.3) {\scriptsize $83$};
		\node  at (7,.3) {\scriptsize $5$};
		\node  at (8,.3) {\scriptsize $28$};
		\node  at (9,.3) {\scriptsize $51$};
		\node  at (10,.3) {\scriptsize $74$};

		\node  at (-10,-.8) {\scriptsize $v_{4}$};
		\node  at (-9,-.8) {\scriptsize $v_{9}$};
		\node  at (-8,-.8) {\scriptsize $v_{14}$};
		\node  at (-7,-.8) {\scriptsize $v_{19}$};
		\node  at (-6,-.8) {\scriptsize $v_{2}$};
		\node  at (-5,-.8) {\scriptsize $v_{7}$};
		\node  at (-4,-.8) {\scriptsize $v_{12}$};
		\node  at (-3,-.8) {\scriptsize $v_{17}$};
		\node  at (-2,-.8) {\scriptsize $v_{0}$};
		\node  at (-1,-.8) {\scriptsize $v_{5}$};
		\node  at (0,-.8) {\scriptsize $v_{10}$};
		\node  at (1,-.8) {\scriptsize $v_{15}$};
		\node  at (2,-.8) {\scriptsize $v_{20}$};
		\node  at (3,-.8) {\scriptsize $v_{3}$};
		\node  at (4,-.8) {\scriptsize $v_{8}$};
		\node  at (5,-.8) {\scriptsize $v_{13}$};
		\node  at (6,-.8) {\scriptsize $v_{18}$};
		\node  at (7,-.8) {\scriptsize $v_{1}$};
		\node  at (8,-.8) {\scriptsize $v_{6}$};
		\node  at (9,-.8) {\scriptsize $v_{11}$};
		\node  at (10,-.8) {\scriptsize $v_{16}$};

		\foreach \x/\y in {-10/-9,-9/-8,-8/-7,-7/-6,-6/-5,-5/-4,-4/-3,-3/-2,-2/-1,-1/0,0/1,1/2,2/3,3/4,4/5,5/6,6/7,7/8,8/9,9/10} 
		\draw[-,draw=black, opacity=.5] (\x) -- (\y);

		\foreach \x/\y/\z/\w in {-8/-4/-7/-5,-4/0/-3/-1,0/4/1/3,4/8/5/7,
			-7/-3/-6/-4,-3/1/-2/0,1/5/2/4,5/9/6/8,
			-10/-6/-9/-7,-6/-2/-5/-3,-2/2/-1/1,2/6/3/5,6/10/7/9,
			-9/-5/-8/-6,-5/-1/-4/-2,-1/3/0/2,3/7/4/6}  
		\draw[-,draw=gray, opacity=.2,line width=0.05mm] (\x) .. controls (\z,1.5) and (\w,1.5) .. (\y);

		\foreach \x/\y/\z/\w in {-9/-6/-8/-7,-6/-3/-5/-4,-3/0/-2/-1,0/3/1/2,3/6/4/5,6/9/7/8,
			-8/-5/-7/-6,-5/-2/-4/-3,-2/1/-1/0,1/4/2/3,4/7/5/6,7/10/8/9,
			-10/-7/-9/-8,-7/-4/-6/-5,-4/-1/-3/-2,-1/2/0/1,2/5/3/4,5/8/6/7}  
		\draw[-,draw=gray, opacity=.2,line width=0.05mm] (\x) .. controls (\z,.9) and (\w,.9) .. (\y);

		\foreach \x/\y/\z in {-10/-8/-9,-8/-6/-7,-6/-4/-5,-4/-2/-3,-2/0/-1,0/2/1,2/4/3,4/6/5,6/8/7,8/10/9,
			-9/-7/-8,-7/-5/-6,-5/-3/-4,-3/-1/-2,-1/1/0,1/3/2,3/5/4,5/7/6,7/9/8}
		\draw[-,draw=gray, opacity=.2,line width=0.05mm] (\x) .. controls (\z,.4) .. (\y);

		\draw[blue, thin] (-2.35,-1) rectangle (2.35,.5);
		\draw[red,thin] (2.65,-1) rectangle (6.35,.5);
		\draw[red,thin] (-2.65,-1) rectangle (-6.35,.5);
		\draw[brown(traditional),thin] (6.65,-1) rectangle (10.35,.5);
		\draw[brown(traditional),thin] (-6.65,-1) rectangle (-10.35,.5);

		\node  at (0,-1.3) {\scriptsize \textcolor{blue}{$L_0$}};
		\node  at (4.5,-1.3) {\scriptsize \textcolor{red}{$L_1$}};
		\node  at (-4.5,-1.3) {\scriptsize \textcolor{red}{$L_1$}};
		\node  at (8.5,-1.3) {\scriptsize \textcolor{brown(traditional)}{$L_2$}};
		\node  at (-8.5,-1.3) {\scriptsize \textcolor{brown(traditional)}{$L_2$}};

	\end{tikzpicture}
	\caption{\textit{Case 2.} $n=20$, $m=4$, $diam(P_{20}^4)=5$, $k=7$.}
	\label{fig:Case 2}
\end{figure}

\medskip

\noindent \textit{Case 3: when $diam(P_n^{m})$ is odd, $m\nmid n$ and $k>diam(P_n^{m})$.}
For any set $A$, let $A^\star$ represent an ordered sequence of the elements of $A$.
Let $G \cong P_n^m$ and 
$S = V(G) = \{ v_0, v_1, v_2, \cdots , v_{2qm+s}\}$. Then $S^\star$ is defined as described.
First, define
$$T = \{ v_t : 0 \leq t \leq s(2q+1) \} - \{ v_{j(2q+1)} : 0 \leq j \leq s \}.$$
Order $T^\star$ as a canonical chain. Also, define $v_{j(2q+1)} = c_j$ for all $0 \leq j \leq s$. 
Assume $G'$ to be the subgraph of $G$ induced by the subset $S - \{ r_{q1}, r_{q2}, \cdots , r_{qs} \}$ of $S$. Then $G' \cong P_{n'}^m$, $m \vert n'$ and $diam(G') = \frac{n'}{m}$ is even, where $n'=n-s$. 
Define
$$v_n = l_{11} \text{ and } U = \{ v_t : s(2q+1) + 1 \leq t < n \}.$$
Note that $U \subset V(G')$. 
Order $U^\star$ (as vertices of $G'$) by the following.
\begin{enumerate}[(i)]
    \item Special alternating chain when $m$ and $s$ have the same parity.
    
    \item Alternating chain when $m$ is even and $s$ is odd.
    
    \item Special canonical chain when $m$ is odd and $s$ is even.
\end{enumerate}

\begin{figure}[h]
	\centering
	\begin{tikzpicture}[scale=.8]
		\foreach \x in {-10,...,9} \foreach \y in {0}{
			\node[circle, draw=black, scale=.3, fill=black] (\x) at (\x,\y){};}

		\node  at (-10,-.3) {\scriptsize $l_{24}$};
		\node  at (-9,-.3) {\scriptsize $l_{23}$};
		\node  at (-8,-.3) {\scriptsize $l_{22}$};
		\node  at (-7,-.3) {\scriptsize $l_{21}$};
		\node  at (-6,-.3) {\scriptsize $l_{14}$};
		\node  at (-5,-.3) {\scriptsize $l_{13}$};
		\node  at (-4,-.3) {\scriptsize $l_{12}$};
		\node  at (-3,-.3) {\scriptsize $l_{11}$};
		\node  at (-2,-.3) {\scriptsize $c_{0}$};
		\node  at (-1,-.3) {\scriptsize $c_{1}$};
		\node  at (0,-.3) {\scriptsize $c_{2}$};
		\node  at (1,-.3) {\scriptsize $c_{3}$};
		\node  at (2,-.3) {\scriptsize $c_{4}$};
		\node  at (3,-.3) {\scriptsize $r_{11}$};
		\node  at (4,-.3) {\scriptsize $r_{12}$};
		\node  at (5,-.3) {\scriptsize $r_{13}$};
		\node  at (6,-.3) {\scriptsize $r_{14}$};
		\node  at (7,-.3) {\scriptsize $r_{21}$};
		\node  at (8,-.3) {\scriptsize $r_{22}$};
		\node  at (9,-.3) {\scriptsize $r_{23}$};

        \node  at (-10,.3) {\scriptsize $18$};
		\node  at (-9,.3) {\scriptsize $41$};
		\node  at (-8,.3) {\scriptsize $64$};
		\node  at (-7,.3) {\scriptsize $79$};
		\node  at (-6,.3) {\scriptsize $9$};
		\node  at (-5,.3) {\scriptsize $32$};
		\node  at (-4,.3) {\scriptsize $55$};
		\node  at (-3,.3) {\scriptsize $90$};
		\node  at (-2,.3) {\scriptsize $0$};
		\node  at (-1,.3) {\scriptsize $23$};
		\node  at (0,.3) {\scriptsize $46$};
		\node  at (1,.3) {\scriptsize $69$};
		\node  at (2,.3) {\scriptsize $84$};
		\node  at (3,.3) {\scriptsize $14$};
		\node  at (4,.3) {\scriptsize $37$};
		\node  at (5,.3) {\scriptsize $60$};
		\node  at (6,.3) {\scriptsize $75$};
		\node  at (7,.3) {\scriptsize $5$};
		\node  at (8,.3) {\scriptsize $28$};
		\node  at (9,.3) {\scriptsize $51$};

		\node  at (-10,-.8) {\scriptsize $v_{4}$};
		\node  at (-9,-.8) {\scriptsize $v_{9}$};
		\node  at (-8,-.8) {\scriptsize $v_{14}$};
		\node  at (-7,-.8) {\scriptsize $v_{17}$};
		\node  at (-6,-.8) {\scriptsize $v_{2}$};
		\node  at (-5,-.8) {\scriptsize $v_{7}$};
		\node  at (-4,-.8) {\scriptsize $v_{12}$};
		\node  at (-3,-.8) {\scriptsize $v_{19}$};
		\node  at (-2,-.8) {\scriptsize $v_{0}$};
		\node  at (-1,-.8) {\scriptsize $v_{5}$};
		\node  at (0,-.8) {\scriptsize $v_{10}$};
		\node  at (1,-.8) {\scriptsize $v_{15}$};
		\node  at (2,-.8) {\scriptsize $v_{18}$};
		\node  at (3,-.8) {\scriptsize $v_{3}$};
		\node  at (4,-.8) {\scriptsize $v_{8}$};
		\node  at (5,-.8) {\scriptsize $v_{13}$};
		\node  at (6,-.8) {\scriptsize $v_{16}$};
		\node  at (7,-.8) {\scriptsize $v_{1}$};
		\node  at (8,-.8) {\scriptsize $v_{6}$};
		\node  at (9,-.8) {\scriptsize $v_{11}$};

		\foreach \x/\y in {-10/-9,-9/-8,-8/-7,-7/-6,-6/-5,-5/-4,-4/-3,-3/-2,-2/-1,-1/0,0/1,1/2,2/3,3/4,4/5,5/6,6/7,7/8,8/9} 
		\draw[-,draw=black, opacity=.5] (\x) -- (\y);

		\foreach \x/\y/\z/\w in {-8/-4/-7/-5,-4/0/-3/-1,0/4/1/3,4/8/5/7,
			-7/-3/-6/-4,-3/1/-2/0,1/5/2/4,5/9/6/8,
			-10/-6/-9/-7,-6/-2/-5/-3,-2/2/-1/1,2/6/3/5,
			-9/-5/-8/-6,-5/-1/-4/-2,-1/3/0/2,3/7/4/6}  
		\draw[-,draw=gray, opacity=.2,line width=0.05mm] (\x) .. controls (\z,1.5) and (\w,1.5) .. (\y);

		\foreach \x/\y/\z/\w in {-9/-6/-8/-7,-6/-3/-5/-4,-3/0/-2/-1,0/3/1/2,3/6/4/5,6/9/7/8,
		-8/-5/-7/-6,-5/-2/-4/-3,-2/1/-1/0,1/4/2/3,4/7/5/6,
		-10/-7/-9/-8,-7/-4/-6/-5,-4/-1/-3/-2,-1/2/0/1,2/5/3/4,5/8/6/7}  
		\draw[-,draw=gray, opacity=.2,line width=0.05mm] (\x) .. controls (\z,.9) and (\w,.9) .. (\y);

		\foreach \x/\y/\z in {-10/-8/-9,-8/-6/-7,-6/-4/-5,-4/-2/-3,-2/0/-1,0/2/1,2/4/3,4/6/5,6/8/7,
		-9/-7/-8,-7/-5/-6,-5/-3/-4,-3/-1/-2,-1/1/0,1/3/2,3/5/4,5/7/6,7/9/8}
		\draw[-,draw=gray, opacity=.2,line width=0.05mm] (\x) .. controls (\z,.4) .. (\y);

		\draw[blue, thin] (-2.35,-1) rectangle (2.35,.5);
		\draw[red,thin] (2.65,-1) rectangle (6.35,.5);
		\draw[red,thin] (-2.65,-1) rectangle (-6.35,.5);
		\draw[brown(traditional),thin] (6.65,-1) rectangle (9.35,.5);
		\draw[brown(traditional),thin] (-6.65,-1) rectangle (-10.35,.5);

		\node  at (0,-1.3) {\scriptsize \textcolor{blue}{$L_0$}};
		\node  at (4.5,-1.3) {\scriptsize \textcolor{red}{$L_1$}};
		\node  at (-4.5,-1.3) {\scriptsize \textcolor{red}{$L_1$}};
		\node  at (8,-1.3) {\scriptsize \textcolor{brown(traditional)}{$L_2$}};
		\node  at (-8.5,-1.3) {\scriptsize \textcolor{brown(traditional)}{$L_2$}};

	\end{tikzpicture}
	\caption{\textit{Case 3.} $n=19$, $m=4$, $diam(P_{19}^4)=5$, $k=7$, $s=3$.}
	\label{fig:Case 3}
\end{figure}

\medskip

\noindent \textit{Case 4: when $diam(P_n^{m})$ is even, $m \nmid n$ and $k>diam(P_n^{m})$.} 
Notice that, in this case, the left vertices are $(m-s)$ more than the right vertices. Also, $L_0$ has 
only one vertex in this case. We shall discard some vertices from the set of left vertices, and then present the ordering. 
To be specific, we disregard the subset $\{l_{11}, l_{12}, \cdots, l_{1(m-s)}\}$, temporarily, 
from the set of left vertices and consider the 
alternating chain. 
First of all, $(v_0, v_1, \cdots, v_{2qm-2m+2s-1})$ is the  alternating chain. Additionally, we have
$$\left(v_{2qm-2m+2s}, v_{2qm-2m+2s+1}, v_{2qm-2m+2s+2}, \cdots, v_{2qm-m+s}\right) = (c_0, l_{11}, l_{12}, \cdots, l_{1(m-s)}).$$

\begin{figure}[h]
        \centering
	\begin{tikzpicture}[scale=.8]
	
		\foreach \x in {-8,...,6} \foreach \y in {0}{
			\node[circle, draw=black, scale=.3, fill=black] (\x) at (\x,\y){};}

		\node  at (-8,-.3) {\scriptsize $l_{24}$};
		\node  at (-7,-.3) {\scriptsize $l_{23}$};
		\node  at (-6,-.3) {\scriptsize $l_{22}$};
		\node  at (-5,-.3) {\scriptsize $l_{21}$};
		\node  at (-4,-.3) {\scriptsize $l_{14}$};
		\node  at (-3,-.3) {\scriptsize $l_{13}$};
		\node  at (-2,-.3) {\scriptsize $l_{12}$};
		\node  at (-1,-.3) {\scriptsize $l_{11}$};
		\node  at (0,-.3) {\scriptsize $c_{0}$};
		\node  at (1,-.3) {\scriptsize $r_{11}$};
		\node  at (2,-.3) {\scriptsize $r_{12}$};
		\node  at (3,-.3) {\scriptsize $r_{13}$};
		\node  at (4,-.3) {\scriptsize $r_{14}$};
		\node  at (5,-.3) {\scriptsize $r_{21}$};
		\node  at (6,-.3) {\scriptsize $r_{22}$};

    	\node  at (-8,.3) {\scriptsize $4$};
		\node  at (-7,.3) {\scriptsize $28$};
		\node  at (-6,.3) {\scriptsize $36$};
		\node  at (-5,.3) {\scriptsize $44$};
		\node  at (-4,.3) {\scriptsize $11$};
		\node  at (-3,.3) {\scriptsize $19$};
		\node  at (-2,.3) {\scriptsize $61$};
		\node  at (-1,.3) {\scriptsize $55$};
		\node  at (0,.3) {\scriptsize $49$};
		\node  at (1,.3) {\scriptsize $0$};
		\node  at (2,.3) {\scriptsize $24$};
		\node  at (3,.3) {\scriptsize $32$};
		\node  at (4,.3) {\scriptsize $40$};
		\node  at (5,.3) {\scriptsize $7$};
		\node  at (6,.3) {\scriptsize $15$};

		\node  at (-8,-.8) {\scriptsize $v_{1}$};
		\node  at (-7,-.8) {\scriptsize $v_{7}$};
		\node  at (-6,-.8) {\scriptsize $v_{9}$};
		\node  at (-5,-.8) {\scriptsize $v_{11}$};
		\node  at (-4,-.8) {\scriptsize $v_{3}$};
		\node  at (-3,-.8) {\scriptsize $v_{5}$};
		\node  at (-2,-.8) {\scriptsize $v_{14}$};
		\node  at (-1,-.8) {\scriptsize $v_{13}$};
		\node  at (0,-.8) {\scriptsize $v_{12}$};
		\node  at (1,-.8) {\scriptsize $v_{0}$};
		\node  at (2,-.8) {\scriptsize $v_{6}$};
		\node  at (3,-.8) {\scriptsize $v_{8}$};
		\node  at (4,-.8) {\scriptsize $v_{10}$};
		\node  at (5,-.8) {\scriptsize $v_{2}$};
		\node  at (6,-.8) {\scriptsize $v_{4}$};

		\foreach \x/\y in {-8/-7,-7/-6,-6/-5,-5/-4,-4/-3,-3/-2,-2/-1,-1/0,0/1,1/2,2/3,3/4,4/5,5/6} 
		\draw[-,draw=black, opacity=.5] (\x) -- (\y);

		\foreach \x/\y/\z/\w in {-8/-4/-7/-5,-4/0/-3/-1,0/4/1/3,
			-7/-3/-6/-4,-3/1/-2/0,1/5/2/4,
			-6/-2/-5/-3,-2/2/-1/1,2/6/3/5,-5/-1/-4/-2,-1/3/0/2}  
		\draw[-,draw=gray, opacity=.2,line width=0.05mm] (\x) .. controls (\z,1.5) and (\w,1.5) .. (\y);

		\foreach \x/\y/\z/\w in {-6/-3/-5/-4,-3/0/-2/-1,0/3/1/2,3/6/4/5,		-8/-5/-7/-6,-5/-2/-4/-3,-2/1/-1/0,1/4/2/3,
		-7/-4/-6/-5,-4/-1/-3/-2,-1/2/0/1,2/5/3/4}  
		\draw[-,draw=gray, opacity=.2,line width=0.05mm] (\x) .. controls (\z,.9) and (\w,.9) .. (\y);

		\foreach \x/\y/\z in {-8/-6/-7,-6/-4/-5,-4/-2/-3,-2/0/-1,0/2/1,2/4/3,4/6/5,
		-7/-5/-6,-5/-3/-4,-3/-1/-2,-1/1/0,1/3/2,3/5/4}
		\draw[-,draw=gray, opacity=.2,line width=0.05mm] (\x) .. controls (\z,.4) .. (\y);

		\draw[blue, thin] (-.35,-1) rectangle (.35,.5);
		\draw[red,thin] (.65,-1) rectangle (4.35,.5);
		\draw[red,thin] (-.65,-1) rectangle (-4.35,.5);
		\draw[brown(traditional),thin] (4.65,-1) rectangle (6.35,.5);
		\draw[brown(traditional),thin] (-4.65,-1) rectangle (-8.35,.5);

		\node  at (0,-1.3) {\scriptsize \textcolor{blue}{$L_0$}};
		\node  at (2.5,-1.3) {\scriptsize \textcolor{red}{$L_1$}};
		\node  at (-2.5,-1.3) {\scriptsize \textcolor{red}{$L_1$}};
		\node  at (5.5,-1.3) {\scriptsize \textcolor{brown(traditional)}{$L_2$}};
		\node  at (-6.5,-1.3) {\scriptsize \textcolor{brown(traditional)}{$L_2$}};

	\end{tikzpicture}
    \caption{\textit{Case 4.} $n=14$, $m=4$, $diam(P_{14}^4)=4$, $k=6$, $s=2$.}
    \label{fig:Case 4}
\end{figure}

\bigskip

Thus, we have obtained a sequence $(v_0, v_1, \cdots, v_n)$ in each case under consideration. Now, we define, $\psi(v_0)=0$ and $\psi(v_{i+1}) = \psi(v_{i}) + k+1 - d(v_i, v_{i+1})$, 
recursively, for all $i \in \{1, 2, \cdots, n-1\}$. 
Next, we  note that $\psi$ is a radio $k$-coloring. 

\medskip

\begin{lemma}\label{lem valid coloring}
The function $\psi$ is a radio $k$-coloring of $P_n^{m}$. 
\end{lemma}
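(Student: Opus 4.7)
To verify that $\psi$ is a radio $k$-coloring, I must show $|\psi(u) - \psi(v)| \ge k + 1 - d(u, v)$ for every pair of distinct vertices. By the recursive construction, consecutive pairs $(v_\ell, v_{\ell+1})$ achieve this with equality, so only the non-adjacent pairs $(v_i, v_j)$ with $j \ge i + 2$ require checking. Since $\psi$ is strictly increasing along the sequence, telescoping recasts the desired inequality as
\[
\sum_{\ell = i}^{j-1} d(v_\ell, v_{\ell+1}) \;\le\; (j - i - 1)(k + 1) + d(v_i, v_j), \qquad (\star)
\]
where the left-hand side measures the total length of the $(v_i, \ldots, v_j)$-walk in $P_n^m$.

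My approach to $(\star)$ is to work directly with the coordinate embedding of $P_n^m$: every vertex is placed at $(x, 0)$ with $x \in \{0, 1, \ldots, n\}$, and $d(u, v) = \lceil |x_u - x_v|/m \rceil$. From the rules defining $\prec_1$ and $\prec_2$ together with the four construction cases, I would write each $x_{v_\ell}$ as a closed-form function of $\ell$, reducing $(\star)$ to an arithmetic inequality that can be examined term by term. In the bulk of the alternating chain, consecutive distances follow a periodic pattern alternating between $q + 1$ and $q + 2$, while second-consecutive vertices sit at coordinate distance exactly $m$, so $d(v_i, v_{i+2}) = 1$; the base case $j = i + 2$ therefore becomes $2q + 3 \le (k + 1) + 1$, which is exactly the hypothesis $k \ge diam(P_n^m) + 1$ in the even case (and analogously for the odd case). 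For $j > i + 2$ inside the bulk, writing $d(v_i, v_j)$ in terms of the explicit alternating coordinates shows that the slack $(k + 1) - (2q + 1) \ge 0$ generated per additional jump absorbs the bounded extra walk length, so $(\star)$ propagates by induction on $j - i$.

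The principal obstacle lies at the seams of the construction: the transition where $c_0$ is appended (Case 1), the boundary where the central clique $\{c_0, c_1, \ldots, c_{m-1}\}$ or $\{c_0, \ldots, c_s\}$ is inserted mid-sequence (Cases 2 and 3), the boundary where the temporarily discarded left vertices $l_{11}, \ldots, l_{1(m-s)}$ are reintroduced (Case 4), and the parity switch in the $\prec_1, \prec_2$ orderings at the midpoint of the chain. At each seam the periodic bulk pattern breaks for a bounded window of consecutive vertices, so I would handle these windows by direct case analysis on their explicit coordinates, verifying $(\star)$ both within the window (via tight base-case inequalities that consume the hypothesis on $k$) and across the window (by combining the window's verified inequality with the bulk telescoping on either side). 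Carefully tracking the minimum slack $k + 1 - diam(P_n^m) - \epsilon$ through each transition is the crux of the computation.
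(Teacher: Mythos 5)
Your reduction to the telescoped inequality $(\star)$ and your analysis of the base case $j=i+2$ (including the seams of the construction) is exactly the nontrivial content here, and in fact it supplies detail that the paper's own proof merely asserts: the paper simply states that $\psi(v_{i+1})-\psi(v_i)=k+1-d(v_i,v_{i+1})$ by construction and that ``one can observe'' $\psi(v_{i+2})-\psi(v_i)\geq k$. However, you are making the case $j>i+2$ much harder than it needs to be, and the argument you sketch for it does not close as stated. The key observation you are missing is that for \emph{any} pair of distinct vertices $d(v_i,v_j)\geq 1$, so the required separation $k+1-d(v_i,v_j)$ is at most $k$; since $\psi$ is strictly increasing along the sequence, for every $j\geq i+2$ one has $\psi(v_j)-\psi(v_i)\geq \psi(v_{i+2})-\psi(v_i)\geq k$, and the verification is finished the moment the index-gap-two case is done. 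This is the route the paper takes, and it eliminates entirely the induction on $j-i$. Your proposed inductive step, by contrast, is not justified by the slack bookkeeping you describe: passing from $j$ to $j+1$ adds $k+1+d(v_i,v_{j+1})-d(v_i,v_j)$ to the right side of $(\star)$, and since $d(v_i,v_{j+1})-d(v_i,v_j)$ can be as negative as $1-diam(P_n^m)$, the guaranteed gain is only $k+2-diam(P_n^m)$ against a possible loss of up to $diam(P_n^m)$ on the left side; with only $k\geq diam(P_n^m)+1$ this does not propagate without the finer coordinate tracking you defer, whereas the $d\geq 1$ observation makes all of that unnecessary. So: keep your base-case and seam analysis (it is the real work, reducible to checking $d(v_i,v_{i+1})+d(v_{i+1},v_{i+2})\leq k+2$ for every $i$), and replace everything about $j>i+2$ with the one-line monotonicity argument.
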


\begin{proof}
Notice that, the way $\psi$ is defined, for all $i \in \{0,1, \cdots, n-1\}$, we have 
$\psi(v_{i+1})-\psi(v_{i}) =  k+1 - d(v_i, v_{i+1})$. 
Furthermore, one can observe that for all 
$i \in \{0, 1, \cdots, n-2\}$, we have 
$\psi(v_{i+2})-\psi(v_{i}) \geq  k$. As the value of the image of $\psi$ increases with respect to the indices of $v_i$s, $\psi$ satisfies the conditions for being a radio $k$-coloring. 
\end{proof}

This brings us to the upper bound for $rc_k(P_n^m)$. 

\begin{lemma}\label{lem upper bound}
For all $k > diam(P_n^{m})$, we have
$$rc_k(P_n^m) \le 
\begin{cases}
nk - \frac{n^2-m^2}{2m} &\text{ if } \lceil \frac{n}{m} \rceil \text{ is odd and } m|n,\\
nk - \frac{n^2-s^2}{2m} + 1 &\text{ if } \lceil \frac{n}{m} \rceil \text{ is odd and } m \nmid n,\\
nk - \frac{n^2}{2m} + 1 &\text{ if } \lceil \frac{n}{m}\rceil \text{ is even and } m|n,\\
nk - \frac{n^2-(m-s)^2}{2m} + 1 &\text{ if } \lceil \frac{n}{m} \rceil \text{ is even and } m \nmid n,
\end{cases}
$$
where $s \equiv n \pmod m$.  
\end{lemma}

\begin{proof}
Observe that, $rc_k(P_n^m)\le span(\psi)$. So, to prove the upper bound, it is enough to show that for all $k>diam(P_n^m)$ and $s \equiv n \pmod m$, 
$$span(\psi) = \begin{cases}
nk - \frac{n^2-m^2}{2m} &\text{ if } \lceil \frac{n}{m} \rceil \text{ is odd and } m|n,\\
nk - \frac{n^2-s^2}{2m} + 1 &\text{ if } \lceil \frac{n}{m} \rceil \text{ is odd and } m \nmid n,\\
nk - \frac{n^2}{2m} + 1 &\text{ if } \lceil \frac{n}{m}\rceil \text{ is even and } m|n,\\
nk - \frac{n^2-(m-s)^2}{2m} + 1 &\text{ if } \lceil \frac{n}{m} \rceil \text{ is even and } m \nmid n.
\end{cases}
$$ 
Notice that, for odd values of $diam(P_n^{m})$ and for even values of $diam(P_n^{m})$ where $m|n$, the whole sequence $(v_0, v_1, \cdots, v_n)$ is optimally colored with respect to $\psi$. Moreover, note that
$$f(v_0)+f(v_n) =
\begin{cases}
0 &\text{ if } \lceil \frac{n}{m} \rceil \text{ is odd and } m|n,\\
1 &\text{ if } \lceil \frac{n}{m} \rceil \text{ is odd and } m \nmid n,\\
1 &\text{ if } \lceil \frac{n}{m}\rceil \text{ is even and } m|n.
\end{cases}
$$
Thus, adding these values with $\alpha_1$ (from Lemma~\ref{lem lower bound alpha1}) will complete the proof for the first three cases.

For the final case, that is, for even values of $diam(P_n^{m})$ where $m \nmid n$, the sequence $(v_0, v_1,$ $\cdots, v_{2(q-1)m+2s+1})$ is an optimally colored sequence. On the other hand,\\$(v_{2(q-1)m+2s+2}, v_{2(q-1)m+2s+3},$ $\cdots, v_n)$ is a loosely colored sequence. Thus, the whole sequence has exactly $(m-s-1)$ loosely colored pairs, namely, \\
$(v_{2(q-1)m+2s+1}, v_{2(q-1)m+2s+2})$, $(v_{2(q-1)m+2s+2}, v_{2(q-1)m+2s+3})$, $\cdots$, $(v_{n-1}, v_n)$. These pairs are nothing but  
$(l_{11}, l_{12})$, $(l_{12}, l_{13})$, $\cdots$, $(l_{1(m-s-1)}, l_{1(m-s)})$. Now, let us count how many extra colors are skipped for each pair. In fact, we claim that the number of extra colors skipped for the pair 
$(l_{1i}, l_{1(i+1)})$ is one, for all $i \in \{1, 2, \cdots, m-s-1\}$.
Notice that, both $l_{1i}$ and $l_{1(i+1)}$ are from $L_1$. Thus, if they were optimally colored, we would have had 
$$\psi(l_{1(i+1)}) = \psi(l_{1i})+k+1 - f(l_{1(i+1)}) - f(l_{1i}) = \psi(l_{1i})+k-1.$$
However, the distance between $l_{1i}$ and $l_{1(i+1)}$ is one. Thus, what we actually have is
$$\psi(l_{1(i+1)}) = \psi(l_{1i})+k+1 - d(l_{1(i+1)}, l_{1i}) = \psi(l_{1i})+k.$$
Thus, a total of extra $(m-s-1)$ colors are skipped while coloring the said loosely colored sequence. 
Moreover, as $f(v_0)+f(v_n) = 2$ in this case, we have $span(\psi) = \alpha_1 + (m-s-1)+2$. 
Hence, simply replacing the value of $\alpha_1$ from Lemma~\ref{lem lower bound alpha1}  in the above equation ends the proof. 
\end{proof}

\subsection{The proofs}
Finally we are ready to conclude the proofs. 

\medskip

\noindent  \textit{Proof of Theorem~\ref{th main}} The proof follows directly from Lemmas \ref{lem lower bound} and \ref{lem upper bound}. \qed

\bigskip

\noindent  \textit{Proof of Theorem~\ref{th algo}}  Notice that the 
proof of the upper bound for Theorem~\ref{th main} is given by prescribing an algorithm (implicitly). The algorithm requires 
ordering the vertices of the input graph, and then providing the coloring based on the ordering. Each step runs in linear order of the number of vertices in the input graph. 
Moreover, we have theoretically proved the tightness of the upper bound. Thus, we are done. \qed

\section{Concluding remarks}\label{conclusion}
\begin{enumerate}[(1)]
    \item Note that when $m=1$, that is, for paths, our result coincides with that of Kchikech, Khennoufa and Togni~\cite{kchikech2007linear} when $k>diam(P_n)$.

    \item We have found the exact value of radio $k$-chromatic number of powers of paths $P_n^m$ having small diameters, that is when $k>diam(P_n^m)$. Also, for $k=diam(P_n^m)$, the exact value of $rc_k(P_n^m)$ has been found~\cite{rao2018radio}. Although, finding the exact value of $rc_k(P_n^m)$ when $k< diam(P_n^m)$ still remains open.

\begin{question}
Find the exact value of $rc_k(P_n^m)$ when $k<diam(P_n^m)$.
\end{question}

\item Saha and Panigrahi~\cite{saha2012antipodal} provided lower and upper bounds for $rc_k(C_n^m)$ when $k< diam(C_n^m)$. 
We believe that our lower bound technique can be modified and used to find radio $k$-chromatic number of cycle powers $C_n^m$ when $k>diam(C_n^m)$. Here, $m$-th power of cycle $C_n^m$ can be obtained by adding edges between the vertices of cycle $C_n$ that are at most $m$ distance apart.

\begin{question}
Determine the exact value of $rc_k(C_n^m)$ for all values of $k$.
\end{question}

\item It is well-known from Brook's theorem~\cite{brooks1941colouring} that for a connected graph $G$, $rc_1(G)\le \Delta-1$ (since $\chi(G)=rc_1(G)-1$), unless $G$ is a complete graph or an odd cycle.
As mentioned earlier, for a graph $G$ with maximum degree $\Delta$, Griggs and Yeh~\cite{griggs1992labelling} conjectured that $rc_2(G)\le \Delta^2$. An intuitive question is to ask whether this could be extended for any value of $k$. Note that, using Brook's Theorem, one can attain a trivial upper bound of $rc_k(G) \leq k\Delta^k$.

\begin{question}
    For a graph $G$ with maximum degree $\Delta$, what is the minimum $r$ such that $rc_k(G)=O(\Delta^r)$?
\end{question}

\item It is not difficult to see that for $\lambda \le 3$, \textsc{Radio 2-Coloring} is polynomial-time solvable. The dichotomy for $k=2$ is settled by the fact that \textsc{Radio 2-Coloring} is NP-complete for $\lambda \ge 4$ ~\cite{fiala2001fixed}. Moreover, it can also be verified that any graph $G$ for which $rc_3(G) = 5$, is a disjoint union of paths of length at most $3$. This implies that the question in \textsc{Radio 3-Coloring} can be answered by simply checking (in polynomial time) whether or not $G$ is a disjoint union of paths of length at most $3$. This immediately postulates the conjecture that \textsc{Radio 3-Coloring} is also NP-complete for $\lambda \ge 6$. In fact, noticing that $rn(P_2) = 3$ and $rn(P_3)=5$, we state the following dichotomy conjecture to be proved or disproved.

\begin{question}
    Is \textsc{Radio $k$-Coloring} polynomial-time solvable for $\lambda \le rn(P_k)$ and NP-complete for $\lambda > rn(P_k)$?
\end{question}

\item Another natural question that can be asked is to explore the complexity status of the problem when we vary $k$.

\begin{question}
    For what values of $\lambda$, the \textsc{Radio $k$-coloring} is NP-complete but the \textsc{Radio $(k+1)$-coloring} is polynomial time solvable?
\end{question}

\item It will also be interesting to know the complexity status of the \textsc{Radio $k$-Coloring} problem, where $k\ge 3$, for popularly studied graph classes, such as, 
planar graphs, chordal graphs, bipartite graphs, etc.
\end{enumerate}

\bigskip

\noindent \textbf{Acknowledgements:}
This work is partially supported  by the following projects: ``MA/\\IFCAM/18/39'', ``SRG/2020/001575'', ``MTR/2021/000858'', and ``NBHM/RP-8 (2020)/\\Fresh''. Research by the first author is supported by the French government IDEX-ISITE initiative CAP 20-25 (ANR-16-IDEX-0001), the International Research Center ``Innovation Transportation and Production Systems" of the I-SITE CAP 20-25, and the ANR project GRALMECO (ANR-21-CE48-0004).

\bibliographystyle{abbrv}
\bibliography{bibliography}

\end{document}